\newtheorem{thm}{Theorem} % 定义定理环境
\newtheorem{defn}{Definition} % 定义定义环境
\newtheorem{lem}{Lemma} % 定义引理环境
\newtheorem{prop}{Proposition}
\newcommand{\be}{\begin{equs}}
\newcommand{\ee}{\end{equs}}
\newcommand{\dist}{\ensuremath{{\rm dist}}}
\newcommand{\loc}{\ensuremath{{\rm loc}}}
\begin{document}
\begin{frontmatter}
%opening
\title{Pullback $V$-Attractors of the Stochastic Calmed 3$D$ Navier-Stokes Equations\tnoteref{label1}}

\tnotetext[label1]{This work was partially supported by NSFC Grant No.12271443.}

\author{Yawen Duan}
\ead{dywen905@163.com}

\author{Anhui Gu\corref{cor1}}
\ead{gahui@swu.edu.cn}
\address{School of Mathematics and Statistics, Southwest
University, Chongqing 400715, China}

\cortext[cor1]{Corresponding author.}
%\linenumbers % 开始显示行号
%\modulolinenumbers[1] % 每隔1行显示一个行号
%\renewcommand{\linenumberfont}{\normalfont\tiny} % 设置行号字体为小号

\begin{abstract}
In this paper, we investigate a calmed version of the 3$D$ rotational Navier-Stokes equations driven by additive noise. First, we use the Ornstein-Uhlenbeck process to transform the equation into a random one. By using the Galerkin approximation, we establish the global well-posedness of solutions for the calmed system. Then, we demonstrate the existence of a closed, measurable $\mathcal{D}_V$-pullback absorbing set. Finally, by proving the pullback flattening property, we obtain the existence of a $\mathcal{D}_V$-pullback attractor in \(V\).
\end{abstract}

\begin{keyword} Stochastic calmed Navier-Stokes equations,  global well-posedness, pullback attractor.

\MSC  35B40, 35B41, 35Q35, 37L55, 60H15, 76D03.

\end{keyword}

\end{frontmatter}

\section{Introduction}\label{sect1}
In this paper, we focus on the three-dimensional incompressible constant-density calmed rotational Navier-Stokes equations with additive noise on a bounded domain $\mathcal{O}\subset\mathbb{R}^3$: for given \(\tau\in\mathbb{R}\), \(T>\tau\),
\begin{align} \label{calmed u}
	\begin{cases}
		\frac{d}{dt}u+(\nabla \times u)\times \zeta^{\epsilon}(u)+\nabla \pi = \nu \triangle u+f+h(x)\frac{dW(t)}{dt} \ &\text{in} \ \mathcal{O}\times (\tau,T), \\
		\nabla \cdot u = 0 \ &\text{in} \ \mathcal{O} \times(\tau,T), \\
		u|_{\partial \mathcal{O}} = 0 \ &\text{on} \  \partial \mathcal{O} \times(\tau,T), \\
		u(x,\tau) = u_0 (x) \ &\text{in} \ \mathcal{O}.
	\end{cases}
\end{align}
Here, $ \nu >0$ is the kinematic viscosity; $u: \mathcal{O} \times [\tau,T] \to \mathbb{R} ^3$ represents the fluid velocity; $f: \mathcal{O} \times [\tau,T] \to \mathbb{R} ^3$ is the body force; $p: \mathcal{O} \times [\tau,T] \to \mathbb{R}$ is the pressure; $\pi:=p+\frac{1}{2}|u|^2$ denotes the dynamic pressure; $W(t)$ is an infinite-dimensional Wiener process; $\zeta^\epsilon$ is a calming function as defined in Definition~\ref{def:calming function}, and $h(x)\in \mathcal{D}(A)$  (with \(A\) being the Stokes operator, see Section 2.1 below) represents a deterministic function.

We observe that when $\zeta^\epsilon(u)\equiv u$, equation \eqref{calmed u} reduces to a classical stochastic Navier-Stokes equation in its rotational form given by:
\begin{align} \label{original}
	\begin{cases}
		\frac{d}{dt}u+(\nabla\times u)\times u+\nabla \pi = \nu \triangle u+f+h(x)\frac{dW(t)}{dt} \ &\text{in} \ \mathcal{O} \times (\tau,T), \\
		\nabla \cdot u = 0 \ &\text{in} \ \mathcal{O}\times(\tau,T), \\
		u|_{\partial \mathcal{O}} = 0 \ &\text{on} \  \partial \mathcal{O} \times(\tau,T), \\
		u(x,\tau) = u_0 (x) \ &\text{in} \ \mathcal{O}.
	\end{cases}
\end{align}

In this work, we use a novel modification termed `calming', achieving controlled attenuation of advective transport via smooth nonlinear truncation. The concept of calming was first introduced by the authors in \cite{enlow2024algebraic} for the 2D Kuramoto-Sivashinsky equations and then subsequently used to deal with the 3D Navier-Stokes equations \cite{enlow2024calmedNS} and the 2D Magnetohydrodynamic-Boussinesq equations \cite{enlow2024calmedOH}.
The method works by modifying the convective velocity field through an energy-dependent damping operator that preserves the original boundary conditions. Actually, applying a bounded truncation operator to the nonlinear term in 3D Navier-Stokes equations was also considered by Yoshida and Giga (1984) and Caraballo et at. (2006) in the study of the globally modified Navier-Stokes equations (see also \cite{Caraballo2023Random,caraballo2013three,2006Unique,hang2024random,kloeden2007pullback,romito2009uniqueness}). Importantly, compared to the above conventional smoothing and filtering approaches, calming avoids artificial viscosity injection and has its own advantages (see e.g. \cite{enlow2024calmedNS}).

From \cite{enlow2024calmedNS}, the following different forms of calming functions are considered, namely,
\begin{align}
	\zeta^\epsilon (x)=	\begin{cases}
		\zeta _1 ^\epsilon (x):=\frac{x}{1+\epsilon |x|}, &\text{or}\\
		\zeta _2 ^\epsilon (x):=\frac{x}{1+\epsilon^2 |x|^2}, &\text{or} \\
		\zeta _3 ^\epsilon (x):=\frac{1}{\epsilon}\arctan(\epsilon x), &\text{or}\\
		\zeta _4 ^\epsilon (x):=q^\epsilon(|x|)\frac{x}{|x|},
	\end{cases}\notag
\end{align}
where the arctangent in $\zeta_3 ^\epsilon$ acts component-wise:
$$\arctan\left((z_1,z_2,z_3)^T\right)=(\arctan(z_1),\arctan(z_2),\arctan(z_3))^T,$$
and
\begin{align}
	q^\epsilon(r)=\begin{cases}
		r,&0\leq r<\frac{1}{\epsilon},\notag\\
		-\frac{2}{\epsilon}(r-\frac{2}{\epsilon})^2+\frac{3}{2\epsilon},&\frac{1}{\epsilon}\leq r<\frac{2}{\epsilon},\notag\\
		\frac{3}{2\epsilon},&r\geq\frac{2}{\epsilon}.\notag
	\end{cases}
\end{align}
Note that $\zeta^\epsilon(x) \to x$ for all $x\in \mathcal{O}$ as $\epsilon \to 0$, and that $\zeta_i^\epsilon \in C^1$ for $i=1,2,3,4$. We further require that $\zeta^\epsilon$ satisfies the conditions specified in the following definition (see, e.g., \cite{enlow2024calmedNS}).
\begin{defn}\label{def:calming function}
	If the following three conditions are satisfied, we call that $\zeta^\epsilon:\mathbb{R}^3\to\mathbb{R}^3$ is a calming function:
	\item[(1)] $\zeta^\epsilon$ is Lipschitz continuous with Lipschitz constant 1.
	\item[(2)] For each fixed $\epsilon>0$, $\zeta^\epsilon$ is bounded.
	\item[(3)] There exist constants $C > 0$, $\alpha > 0$, and $\beta\geq 1$ such that for every $x\in\mathbb{R}^3$,
	\begin{align}
		|\zeta^\epsilon(x)-x|\leq C\epsilon^\alpha|x|^\beta.\notag
	\end{align}
\end{defn}
This paper first establishes the existence and uniqueness of solutions to \eqref{calmed u}. Central to our approach is the use of the Ornstein-Uhlenbeck process to transform the original equation into a random differential equation with random coefficients. This transformation enables the application of classical deterministic techniques. The key insight is that employing a calming function within the nonlinear term provides otherwise unattainable control over the \(L^\infty\) norm, thereby avoiding the need to alter boundary conditions or introduce higher-order derivatives \cite{enlow2024calmedNS}.
Next, we proceed by defining a continuous cocycle $\Phi$ through the solution. Subsequently, we establish that this cocycle satisfies the conditions required for the existence of a $\mathcal{D}$-pullback attractor in the space \(V\). The primary challenge in proving the existence of the pullback attractor in \(V\) arises when demonstrating asymptotic compactness via standard techniques. Such approaches typically necessitate higher-order estimates in the \(H^2\) space, coupled with the application of compact embedding theorems, inevitably involving complex inequalities or norms with higher regularity. To circumvent this difficulty, we adopt the flattening method introduced in \cite{kloeden2007flattening}.
This flattening technique bypasses the standard compactness obstacle by employing finite-dimensional projections. It requires only \(H_0^1\) energy estimates, avoiding higher regularity. In addition, the framework of pullback flattening, which is universal and applicable to both random and deterministic (autonomous or nonautonomous) dynamical systems, is relatively straightforward to verify, as its estimates are similar to those for bounded absorbing sets. Therefore, proving the pullback flattening property for $\Phi$ suffices to guarantee the existence of pullback attractors in \(V\) (see e.g.\cite{kloeden2007flattening}).

This paper is organized as follows. In section \ref{sect2}, we present some preliminaries on pullback attractors as well as some assumptions.
In Section \ref{sect3}, we establish the global well-posedness of \eqref{calmed u} in \(V\) by using the classical Galerkin method. Finally in section \ref{sect4}, we prove the existence of a $\mathcal{D}_V$-pullback attractor for the calmed system \eqref{calmed u}.

Throughout this paper, the constant \(C_{\alpha_1,\alpha_2,...,\alpha_N}\) depending on different parameters \(\alpha_1,\alpha_2,...,\alpha_N\), may change from line to line. The notations \(c_1,c_2,...,c_N\) represent generic positive constants.

\section{Preliminaries}\label{sect2}
\subsection{Notation and hypotheses}
We use standard notation borrowed from \cite{constantin1988navier,robinson2001infinite,temam2001navier}. Let $\mathcal{O}\subset\mathbb{R}^3$ be a bounded, open, connected convex set with $C^2$ boundary. Then there  exist positive constants $c_1$ and $c_2$, such that
\begin{align}\label{AuL2 uH2}
	c_1\|Au\|_{L^2}\leq\|u\|_{H^2}\leq c_2\|Au\|_{L^2},
\end{align}
where $A$ is defined in \eqref{A def} (cf. \cite{dauge1989stationary,guermond2011note}).
Let $C_{c}^{\infty}(\mathcal{O})$ denote the space of smooth test functions with compact support from $\mathcal{O}$ to $\mathbb{R}^3$. Then, $H_0^1(\mathcal{O})\equiv H_0^1$ is defined as the closure of $C_{c}^{\infty}(\mathcal{O})$ in $H_0^1(\mathcal{O})$.
Define
\begin{align}
	\mathcal{V}=\left\{\phi\in C_{c}^{\infty}(\mathcal{O}):\nabla\cdot\phi=0 \right\},\notag
\end{align}
and let $H$ and $V$ be the closures of $\mathcal{V}$ through $L^2(\mathcal{O})$ and $H_0^1(\mathcal{O})$ respectively.
We also let
\be
	(u,v):=\sum_{i=1}^{3}\int_{\mathcal{O}}u_i(x)v_i(x)dx, \quad
		\|u\|_{H_m}:=\biggl(\sum_{|\alpha|\leq m} \|D^\alpha u\|_{L^2}^2\biggr)^{\frac{1}{2}},
\ee
denote the $L^2$ inner product and the $H^m$ Sobolev norm. Let $\alpha=(\alpha_1,\alpha_2,\alpha_3)$ and define the differential operator $D^{\alpha}u=\partial_1^{\alpha_1}\partial_2^{\alpha_2}\partial_3^{\alpha_3}u$. For notational simplicity, we adopt the standard conventions
$L^2(\mathcal{O})\equiv L^2$
and $H^m(\mathcal{O})\equiv H^m$.

Furthermore, for \(1<p<\infty\), we define $L^p(\tau,T;X)$ as the space of Bochner-integrable functions mapping from the interval $[\tau, T]$ to the Banach space $X$. The norm is given by
\begin{align}
	\|u\|_{L^p(\tau,T;X)}=\left(\int_{\tau}^{T}\|u\|_{X}^p dt\right)^{\frac{1}{p}}, \notag
\end{align}
and for convenience we denote $\|u\|_{L^p(\tau,T;X)}$ by $\|u\|_{L^pX}$.
Let $P_{\sigma}:L^2(\mathcal{O})\to H$ be the Leray-Helmholtz orthogonal projection. Let $A:\mathcal{D}(A)\subset H\to H$ be the Stokes operator defined on the domain $\mathcal{D}(A):=H^2(\mathcal{O})\cap V$, where
\begin{align}\label{A def}
	A:=-P_{\sigma}\Delta.
\end{align}
The operator \(A\) is positive-definite, self-adjoint. By the Hilbert-Schmidt theorem, there exists a complete orthonormal system$\left\{w_j\right\}_{j=1}^{\infty}$ in $H$ consisting of eigenfunctions of the compact operator $A^{-1}$.
These eigenfunctions simultaneously serve as eigenfunctions of \(A\) itself.
The corresponding eigenvalues $\left\{\lambda_j\right\}_{j=1}^{\infty}$ satisfy $0<\lambda_1\leq\lambda_2\leq...\to\infty$ (as \(j\to\infty\)) with
$Aw_j=\lambda_jw_j$ holding for each \(j\).
Furthermore, the divergence-free condition allows us to define the \(V\)-norm via
\begin{align}
	\langle Au,u\rangle=\|A^{\frac{1}{2}}u\|_{L^2}=\|\nabla u\|_{L^2}^2.\notag
\end{align}
Let us recall the Poincar\'{e} inequalities:
\be
	\|u\|_{L^2}^2\leq\lambda_1^{-1}\|\nabla u\|_{L^2}^2 \quad \text{for all}\ u\in V, \notag\\
	\|\nabla u\|_{L^2}^2\leq\lambda_1^{-1}\|Au\|_{L^2}^2 \quad \text{for all}\ u\in\mathcal{D}(A).
\ee
We also recall the Gagliardo-Nirenberg-Sobolev interpolation inequality in $\mathbb{R}^n$, that is, for $1\leq p,q< \infty$, $\frac{1}{p}=\frac{\theta}{q}+(1-\theta)(\frac{1}{2}-\frac{|\alpha|}{n})$, $\theta\in[0,1]$, there exists a constant \(C>0\) such that
\begin{align}
	\|u\|_{L^p}\leq C\|u\|_{L^q}^\theta \|D^\alpha u\|_{L^2}^{1-\theta}.\notag
\end{align}
Let $P_m$ denote the orthogonal projection onto the first $m$ eigenfunctions of the Stokes operator $A$, i.e. $P_m u=\sum_{j=1}^{m}u_jw_j$, where $u_j := (u, w_j)$ is the Fourier coefficients of $u$. This projection satisfies the fundamental estimate:
\begin{align}\label{I-Pm}
	\|(I-P_m)u\|_{L^2}^2\leq\lambda_m^{-s}\|u\|_{H^s}^2,
\end{align}
for all $u\in H^s(\mathcal{O}),s>0$.

The nonlinear term $B(u,v)$ is defined by
\begin{align}
	B(u,v):=P_\sigma((\nabla\times v)\times u),\notag
\end{align}
for $u\in C_c^\infty(\mathcal{O})$ and $v\in\mathcal{V}$.
The operator admits a continuous extension to a bounded bilinear operator $B:H_0^1\times V\to V'$. Analogously, define the associated trilinear operator $b:H_0^1\times V\times V\to\mathbb{R}$ by
\begin{align}
	b(u,v,w):=\langle B(u,v),w\rangle,\notag
\end{align}
for all $u\in H_0^1$ and $v,w\in V$.

To recover the pressure term in systems \eqref{original}, we use de Rham's results (see  \cite{temam2001navier},\cite{wang1993remark}), which show that for $f\in C_c^\infty(\mathcal{O})$,
\begin{align}\label{f=nablap}
	f=\nabla p\ \text{for some}\ p\in C_c^\infty(\mathcal{O})\quad \text{if and only if}\quad \langle f,v\rangle=0\ \text{for all}\ v\in\mathcal{V}.
\end{align}
In order to prove the existence of the pullback attractor, we make the assumptions on the external force $f\in L_{\loc}^2(\mathbb{R},H)$:
\begin{enumerate}
	\item[(A1)] There exists a fixed constant  $\alpha\in(0,\nu\lambda_1)$ such that
$$\int_{-\infty}^{\tau}e^{\alpha s}\|f(s, \cdot)\|_{L^2}^2ds<\infty,\quad\text{for all}\ \tau\in\mathbb{R};$$
	\item[(A2)] For every $c>0$, $\lim\limits_{t \to -\infty}e^{ct}\int_{-\infty}^{0}e^{\alpha s}\|f(s+t, \cdot)\|_{L^2}^2ds=0.$
\end{enumerate}	
Furthermore,	
\begin{enumerate}
	\item[(A3)] For fixed $\epsilon>0$, \(\|\zeta^\epsilon\|_{L^\infty}<\nu\sqrt{\frac{\lambda_1}{2}}\).
\end{enumerate}

To handle the calming function in the nonlinear term, we first establish its fundamental properties in the following lemma, which are essential for the subsequent analysis of the bilinear form and energy estimates. The following lemma is adapted from \cite{enlow2024calmedNS}.
\begin{lem}\label{lem:int b is linear}
	Let $\zeta^\epsilon$ be the calming function that satisfies condition 1 of Definition~\ref{def:calming function}.
\begin{enumerate}
	\item[(1)] For any $u\in L^p(\mathcal{O})$ with $p \in[1,\infty]$, we have $\zeta^\epsilon(u)\in L^p(\mathcal{O})$, and $\zeta^\epsilon$ is Lipschitz continuous in $L^p$ with Lipschitz constant $C_\zeta$.
	\item[(2)] Define a mapping $I:L^2(\tau,T;H)\times L^2(\tau,T;V)\times L^2(\tau,T;H)\to\mathbb{R}$ by
	\begin{align}
		I(u,v,w)=\int_{\tau}^{T}b(\zeta^\epsilon(u),v,w)dt,\notag
	\end{align}
	which is linear and continuous in $v$ and $w$.
\end{enumerate}
\end{lem}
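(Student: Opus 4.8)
The plan is to reduce both statements to pointwise bounds coming from Definition~\ref{def:calming function} and then integrate, first over $\mathcal{O}$ and afterwards over $[\tau,T]$. For part~(1), I would start from the pointwise Lipschitz bound $|\zeta^\epsilon(x)-\zeta^\epsilon(y)|\le|x-y|$ supplied by condition~(1). Taking $y=0$ gives $|\zeta^\epsilon(x)|\le|x|+|\zeta^\epsilon(0)|$, so for $u\in L^p(\mathcal{O})$ with $p<\infty$ the triangle inequality yields $\|\zeta^\epsilon(u)\|_{L^p}\le\|u\|_{L^p}+|\zeta^\epsilon(0)|\,|\mathcal{O}|^{1/p}<\infty$ since $\mathcal{O}$ is bounded, and the case $p=\infty$ is identical with the essential supremum. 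For the Lipschitz property I would apply the pointwise bound to $u(x),v(x)$ and raise to the $p$-th power,
\begin{align}
\|\zeta^\epsilon(u)-\zeta^\epsilon(v)\|_{L^p}^p=\int_{\mathcal{O}}|\zeta^\epsilon(u(x))-\zeta^\epsilon(v(x))|^p\,dx\le\int_{\mathcal{O}}|u(x)-v(x)|^p\,dx=\|u-v\|_{L^p}^p,\notag
\end{align}
so the claim holds with $C_\zeta=1$ for every $p\in[1,\infty]$ (again using the essential supremum when $p=\infty$).

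For part~(2), linearity is immediate: the map $v\mapsto(\nabla\times v)\times\zeta^\epsilon(u)$ is linear because $\nabla\times$ is linear and the cross product is bilinear, so with $\zeta^\epsilon(u)$ frozen both $b(\zeta^\epsilon(u),\cdot,w)$ and $I(u,\cdot,w)$ are linear in $v$, while linearity in $w$ follows from linearity of the pairing. The substance is continuity. Since $P_\sigma$ is the self-adjoint orthogonal projection onto $H$ and $w(t)\in H$, I would rewrite
\begin{align}
b(\zeta^\epsilon(u),v,w)=\big\langle P_\sigma\big((\nabla\times v)\times\zeta^\epsilon(u)\big),w\big\rangle=\int_{\mathcal{O}}\big((\nabla\times v)\times\zeta^\epsilon(u)\big)\cdot w\,dx,\notag
\end{align}
and estimate by H\"older with exponents $(2,\infty,2)$, using $\|\nabla\times v\|_{L^2}\le C\|\nabla v\|_{L^2}=C\|v\|_V$ and the boundedness of $\zeta^\epsilon$ from condition~(2),
\begin{align}
|b(\zeta^\epsilon(u),v,w)|\le\|\nabla\times v\|_{L^2}\,\|\zeta^\epsilon(u)\|_{L^\infty}\,\|w\|_{L^2}\le C\|\zeta^\epsilon\|_{L^\infty}\,\|v\|_V\,\|w\|_H.\notag
\end{align}
Integrating in time and applying the Cauchy--Schwarz inequality then gives
\begin{align}
|I(u,v,w)|\le C\|\zeta^\epsilon\|_{L^\infty}\int_\tau^T\|v\|_V\|w\|_H\,dt\le C\|\zeta^\epsilon\|_{L^\infty}\,\|v\|_{L^2(\tau,T;V)}\,\|w\|_{L^2(\tau,T;H)},\notag
\end{align}
which is exactly the desired joint continuity in $v$ and $w$.

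The step I expect to be the main obstacle is that the target argument $w$ lies only in $H$ (equivalently $L^2$) rather than in $V$, so the pairing $\langle B(\zeta^\epsilon(u),v),w\rangle$ is not covered a priori by the stated mapping $B:H_0^1\times V\to V'$. The resolution is to observe that the boundedness of the calming function promotes $(\nabla\times v)\times\zeta^\epsilon(u)$ to an $L^2$ field, so that $B(\zeta^\epsilon(u),v)=P_\sigma((\nabla\times v)\times\zeta^\epsilon(u))$ in fact belongs to $H$ and may be paired with $w\in H$ through the $L^2$ inner product. I would also record the routine measurability of $t\mapsto b(\zeta^\epsilon(u(t)),v(t),w(t))$, which together with the bound above makes $I$ well defined on the product Bochner space.
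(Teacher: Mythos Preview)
Your proposal is correct. The paper itself does not prove this lemma; it simply records that the result ``is adapted from \cite{enlow2024calmedNS}'' and gives no argument, so there is nothing to compare against beyond noting that your approach---reducing part~(1) to the pointwise Lipschitz bound and integrating, and handling part~(2) via the $L^\infty$ bound on $\zeta^\epsilon$ together with H\"older in space and Cauchy--Schwarz in time---is the standard one.

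One minor remark: the lemma's hypothesis singles out condition~(1) of Definition~\ref{def:calming function}, but your proof of part~(2) also invokes condition~(2), the boundedness of $\zeta^\epsilon$. This is unavoidable: without boundedness, $(\nabla\times v)\times\zeta^\epsilon(u)$ need not lie in $L^2$ when $u\in H$ and $v\in V$, and the trilinear form would not even be well defined on the stated domain $L^2(\tau,T;H)\times L^2(\tau,T;V)\times L^2(\tau,T;H)$. So this is a wording issue in the statement rather than a gap in your argument, and the paper itself freely uses $\|\zeta^\epsilon\|_{L^\infty}$ throughout.
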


\subsection{Pullback attractors}
We recall the fundamental concepts on the theory of pullback attractors from \cite{kloeden2007flattening} and \cite{wang2012sufficient}. Let $(\Omega,\mathcal{F},\mathbb{P})$ be a probability space, where $\mathbb{P}$ is the Wiener measure on $(\Omega,\mathcal{F})$, and $(X,d)$ be a Polish space with its Borel $\sigma$-algebra $\mathcal{B}(X)$. The Wiener shift $\left\{\theta_t\right\}_{t\in\mathbb{R}}$ on the probability space $(\Omega,\mathcal{F},\mathbb{P})$ defines a metric dynamical system through the transformation:
$$\theta_t\omega(\cdot)=\omega(t+\cdot)-\omega(t),t\in\mathbb{R}.$$
This makes  $(\Omega,\mathcal{F},\mathbb{P},\left\{\theta_t\right\}_{t\in\mathbb{R}})$ a measure-preserving dynamical system.
\begin{defn}\label{def:continuous cocycle}
	Let $(\Omega,\mathcal{F},\mathbb{P},\left\{\theta_t\right\}_{t\in\mathbb{R}})$ be a metric dynamical system. A mapping  $\Phi:\mathbb{R}^+\times\mathbb{R}\times\Omega\times X\to X$ is called a continuous cocycle on $X$ over $\mathbb{R}$ and $(\Omega,\mathcal{F},\mathbb{P},\left\{\theta_t\right\}_{t\in\mathbb{R}})$ if for all $t,s\in\mathbb{R}^+$, $\tau\in\mathbb{R}$ and $\omega\in\Omega$, the following conditions hold:
	\begin{description}
		\item[(i)]$\Phi(\cdot,\tau,\cdot,\cdot):\mathbb{R}^+\times\Omega\times X\to X$ is $(\mathcal{B}(\mathbb{R}^+)\times\mathcal{F}\times \mathcal{B}(X), \mathcal{B}(X))$–measurable;
		\item[(ii)]$\Phi(0,\tau,\omega,\cdot)=\operatorname{id}$ on X;
		\item[(iii)]$\Phi(t+s,\tau,\omega,\cdot)= \Phi(t,\tau+s,\theta_s \omega,\cdot) \circ \Phi(s,\tau,\omega,\cdot)$;
		\item[(iv)]$\Phi(t,\tau,\omega,\cdot):X\to X$ is continuous.
	\end{description}
\end{defn}
    Let $\mathcal{D}$ be a collection of some families of nonempty subsets of $X$ parameterized by $\tau\in\mathbb{R}$ and $\omega\in\Omega$, that is, $$\mathcal{D}=\left\{D=\left\{D(\tau,\omega)\subseteq X:D(\tau,\omega)\neq\emptyset,\tau\in\mathbb{R},\omega\in\Omega\right\}\right\}.$$

\begin{defn}\label{def:inclusion-closed}
	A collection $\mathcal{D}$ of some families of nonempty subsets of $X$ is said to be inclusion-closed, if for all  $\tau\in\mathbb{R}$, $\omega\in\Omega$ and  $D=\left\{D(\tau,\omega):\tau\in\mathbb{R}
	,\omega\in\Omega \right\}\in\mathcal{D}$, the family
	\begin{align}
		\left\{B(\tau,\omega):B(\tau,\omega)\ \text{is a nonempty subset of}\ D(\tau,\omega)  \right\}\in \mathcal{D}.\notag
	\end{align}
\end{defn}

\begin{defn}\label{def:tempered}
	A random bounded set $D=\{D(\tau,\omega):\tau\in\mathbb{R},\omega\in \Omega\}$ of $X$ is called tempered, if for $\mathbb{P}$-a.e., $\omega\in\Omega$, $\tau\in \mathbb{R}$,
	$$\lim\limits_{t\to-\infty}e^{\gamma t}|D(\tau,\theta_{-t}\omega)|_X=0,\\\ for\ all\ \gamma>0,$$
where $|D(\tau,\theta_{-t}\omega)|_X=\sup_{x\in D(\tau,\theta_{-t}\omega) }\|x\|_X.$
\end{defn}
\begin{defn}
	Let $\mathcal{D}$ be a collection of some families of nonempty subsets of $X$ and $$K=\left\{K(\tau,\omega):\tau \in \mathbb{R},\omega\in\Omega \right\}\in \mathcal{D}.$$ Then, $K$ is called a $\mathcal{D}$-pullback absorbing set for $\Phi$ if for all $\tau\in \mathbb{R},\omega\in\Omega$ and for every $B\in \mathcal{D}$, there exists $T=T(B,\tau,\omega)>0$ such that $$\Phi(t,\tau-t,\theta_{-t}\omega,B(\tau-t,\theta_{-t}\omega))\subseteq K(\tau,\omega)\ \ for\ all\ t\geq T.$$
\end{defn}

\begin{defn}
	Let $\mathcal{D}$ be a collection of some families of nonempty subsets of $X$. An RDS \(\Phi\) on \(X\) is said to be \(\mathcal{D}\)-pullback flattening if for \(\mathbb{P}\)-a.e. \(\omega\in\Omega\), \(\delta>0\), \(D\in\mathcal{D}\), there exists \(T=T(\tau,\omega,D,\delta)\) and a finite-dimensional space \(X_\delta \subseteq X\), such that for a bounded projection \(P_\delta : X\to X_\delta\), the following conditions hold:
	\item[(i)] \(P_\delta \biggl(\bigcup\limits_{t\geq T}\Phi\left(t,\tau-t,\theta_{-t}\omega,D(\tau-t,\theta_{-t}\omega)\right)\biggr)\) is bounded in \(X\),\\
	\item[(ii)] \(\left\|\left(I-P_\delta\right)\biggl(\bigcup\limits_{t\geq T}\Phi\left(t,\tau-t,\theta_{-t}\omega,D(\tau-t,\theta_{-t}\omega)\right)\biggr)\right\|_{X}<\delta\).
\end{defn}

\begin{defn}\label{def:pullback attractor}
	Let $\mathcal{D}$ be a collection of some families of nonempty subsets of $X$. And $$\mathcal{A}=\left\{\mathcal{A}(\tau,\omega):\tau\in\mathbb{R},\omega\in\Omega\right\}\in\mathcal{D}.$$ Then, $\mathcal{A}$ is called a $\mathcal{D}$-pullback attractor for $\Phi$ if the following conditions are satisfied:
	\begin{description}
		\item[(i)] $\mathcal{A}$ is measurable and $\mathcal{A}(\tau,\omega)$ is compact for all $\tau \in \mathbb{R}$ and $\omega\in \Omega$;
		\item[(ii)] $\mathcal{A}$ is invariant, i.e. for every $\tau\in\mathbb{R}$ and $\omega\in\Omega$,
		\be
			\Phi(t,\tau,\omega,\mathcal{A(\tau,\omega)})=\mathcal{A}(\tau + t,\theta_t \omega)\quad \text{for all}\ t\geq0;
		\ee
		\item[(iii)] $\mathcal{A}$ attracts every member of $\mathcal{D}$, i.e. for every $B=\left\{B(\tau,\omega):\tau\in\mathbb{R},\omega\in\Omega\right\}\in\mathcal{D}$ and for every $\tau\in\mathbb{R}$ and $\omega\in\Omega$,
		$$\lim\limits_{t\to\infty}\dist\left(\Phi(t,\tau-t,\theta_{-t}\omega,B(\tau-t,\theta_{-t}\omega)),\mathcal{A}(\tau,\omega)\right)=0.$$
	\end{description}
\end{defn}
Let \(X\) be a uniformly convex Banach space. We use the following result to establish the existence of a $\mathcal{D}$-pullback attractor (see e.g. \cite{kloeden2007flattening}).
\begin{prop}\label{prop:pullback attractor}
	Let \(X\) be a uniformly convex Banach space, $\mathcal{D}$ be a collection of some families of nonempty subsets of $X$ and $\Phi$ be a continuous RDS on $X$ over $(\Omega,\mathcal{F},\mathbb{P},\left\{\theta_t\right\}_{t\in\mathbb{R}})$. If $B$ is a closed measurable $\mathcal{D}$-pullback absorbing set for $\Phi$ in $\mathcal{D}$ and $\Phi$ is  $\mathcal{D}$-pullback flattening in $X$, then $\Phi$ has a unique $\mathcal{D}$-pullback attractor $\mathcal{A}$.
\end{prop}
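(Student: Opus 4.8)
The plan is to deduce the proposition from the standard criterion that a continuous cocycle admitting a pullback absorbing set possesses a $\mathcal{D}$-pullback attractor if and only if it is $\mathcal{D}$-pullback asymptotically compact (see \cite{wang2012sufficient,kloeden2007flattening}). Hence the crux is to extract asymptotic compactness from the flattening hypothesis; once this is in hand, the absorbing set $B$ supplies the remaining ingredient and the attractor is realized as the pullback $\omega$-limit set
\[
	\mathcal{A}(\tau,\omega)=\bigcap_{s\geq 0}\overline{\bigcup_{t\geq s}\Phi\left(t,\tau-t,\theta_{-t}\omega,B(\tau-t,\theta_{-t}\omega)\right)},
\]
where the closure is taken in $X$.

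To prove that flattening implies $\mathcal{D}$-pullback asymptotic compactness, fix $\tau\in\mathbb{R}$, $\omega\in\Omega$ and $D\in\mathcal{D}$, and let $y_n=\Phi(t_n,\tau-t_n,\theta_{-t_n}\omega,x_n)$ with $t_n\to\infty$ and $x_n\in D(\tau-t_n,\theta_{-t_n}\omega)$; I would show $\{y_n\}$ is precompact in $X$. For each $\delta>0$ the flattening property furnishes a finite-dimensional subspace $X_\delta$ and a bounded projection $P_\delta$ such that, for all sufficiently large $n$, the vector $P_\delta y_n$ lies in a bounded subset of $X_\delta$ while $\|(I-P_\delta)y_n\|_X<\delta$. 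Since bounded subsets of the finite-dimensional space $X_\delta$ are precompact, a subsequence of $\{P_\delta y_n\}$ converges. Carrying this out successively along $\delta=1/k$ and passing to a diagonal subsequence $\{y_{n_k}\}$ yields a single sequence whose projected part is Cauchy at every scale while its complementary part has norm at most $1/k$. The triangle-inequality estimate
\[
	\|y_{n_k}-y_{n_l}\|_X\leq\|P_\delta(y_{n_k}-y_{n_l})\|_X+\|(I-P_\delta)y_{n_k}\|_X+\|(I-P_\delta)y_{n_l}\|_X
\]
then shows $\{y_{n_k}\}$ is Cauchy and hence convergent by completeness of $X$. This diagonal extraction, together with the verification that the limit sequence is genuinely Cauchy, constitutes the main obstacle of the argument; the uniform convexity of $X$ provides the ambient setting within which the bounded finite-dimensional projections $P_\delta$ and this compactness scheme are available.

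With the closed measurable $\mathcal{D}$-pullback absorbing set $B$ and the asymptotic compactness just established, the abstract existence theorem of \cite{wang2012sufficient} guarantees that $\mathcal{A}(\tau,\omega)$ is nonempty and compact, invariant under $\Phi$, and $\mathcal{D}$-pullback attracting for every member of $\mathcal{D}$, so that $\mathcal{A}=\{\mathcal{A}(\tau,\omega)\}$ meets Definition~\ref{def:pullback attractor}. Measurability of $(\tau,\omega)\mapsto\mathcal{A}(\tau,\omega)$ follows from the measurability of $B$, the joint measurability of $\Phi$, and the measurability of the pullback $\omega$-limit construction in \cite{wang2012sufficient}. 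Finally, uniqueness is standard: if $\mathcal{A}$ and $\widetilde{\mathcal{A}}$ are two $\mathcal{D}$-pullback attractors, their invariance together with mutual pullback attraction forces $\dist(\mathcal{A}(\tau,\omega),\widetilde{\mathcal{A}}(\tau,\omega))=\dist(\widetilde{\mathcal{A}}(\tau,\omega),\mathcal{A}(\tau,\omega))=0$ for all $\tau,\omega$, whence $\mathcal{A}=\widetilde{\mathcal{A}}$ by compactness.
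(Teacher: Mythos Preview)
The paper does not prove this proposition at all: it is stated as a known result and attributed to the literature (``see e.g.\ \cite{kloeden2007flattening}''), so there is no proof in the paper to compare against. Your proposal is therefore not a reproduction of the paper's argument but an independent sketch of the proof of the cited abstract result.

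That said, your sketch is sound and follows the route of the original Kloeden--Langa argument: one shows that $\mathcal{D}$-pullback flattening implies $\mathcal{D}$-pullback asymptotic compactness via a diagonal extraction over $\delta=1/k$, and then invokes the standard existence criterion (closed measurable absorbing set plus asymptotic compactness) as in \cite{wang2012sufficient}. The triangle-inequality splitting and the diagonal argument you describe are exactly the mechanism used in \cite{kloeden2007flattening}. One small point worth sharpening: the role of uniform convexity is not merely ``ambient''---in the original framework it guarantees that the canonical projections onto finite-dimensional subspaces are bounded with operator norm at most $1$ (via nearest-point projection), which is what makes the flattening hypothesis verifiable in practice and keeps the estimate $\|P_\delta(y_{n_k}-y_{n_l})\|_X$ under control without hidden $\delta$-dependent constants. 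Your argument does not actually use this explicitly because the flattening definition already hands you bounded $P_\delta$, but it is worth stating why the hypothesis is there.
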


\section{Global well-posedness of solutions for the calmed system}\label{sect3}
This section establishes the global well-posedness of solutions for the calmed 3D stochastic Navier-Stokes equation with additive noise in \(V\).
To convert the stochastic differential equation into a random one,
we need the Ornstein-Uhlenbeck (O-U) process. Let
$$z(\theta_t \omega)=-\gamma\int_{-\infty}^{0}e^{\gamma s} (\theta_t \omega)(s)ds, \quad t \in \mathbb{R},$$
be the O-U process, which is the solution to stochastic differential equation
$$dz+\gamma zdt=dW(t).$$
Then, there exists a $\theta_t$-invariant set (still denoted as) $\Omega$ of full measure such that
\begin{enumerate}
	\item[(i)] $z(\theta_t \omega)$ is continuous with respect to $t$,
	\item[(ii)] For all \(\omega\in\Omega\),
	\be\label{z polynomial growth}
		\lim\limits_{t\to \pm\infty}\frac{|z(\theta_t \omega)|}{t}=0, \ \ \
		\lim\limits_{t\to \pm\infty}\frac{1}{t}\int_{0}^{t}z(\theta_r \omega)dr=0.
	\ee
%	\item[(iii)] For all \(\omega\in\Omega\),
%	\begin{align}\label{z estimate}
%		\lim\limits_{t\to \pm\infty}\frac{1}{t}\int_{0}^{t}|z(\theta_r \omega)|dr=\frac{1}{\sqrt{\pi\gamma}}.
%	\end{align}
\end{enumerate}
%From now on, we always assume that $\gamma$ is a fixed positive number such that
%\begin{align}\label{gamma estimate}
%	\gamma> \frac{\lambda_1^2}{\pi(\nu\lambda_1-\alpha)^2}.
%\end{align}
Let
\begin{align}\label{transformation}
	v(t)=u(t)-h(x)z(\theta_t \omega), \omega \in \Omega.
\end{align}
Then equation \eqref{calmed u} can be written as:
\be\label{calmed OU}
	\frac{dv}{dt}+\left(\nabla \times (v+hz(\theta_t\omega))\right)\times \zeta^{\epsilon}(v+hz(\theta_t\omega))+\nabla \pi
	=\nu \triangle \left(v+hz(\theta_t\omega)\right)+f+\gamma h(x)z(\theta _t \omega).
\ee
\begin{defn}\label{solution}
	Assume that $f\in L^2(\tau,T;H)$, $v_0\in V$, $\tau\in\mathbb{R}$ and $T>\tau$ be any fixed time. We call $v$ a solution to \eqref{calmed OU} on the time interval $[\tau,T]$, if for $\mathbb{P}$-a.s., $\omega\in\Omega$,
	\begin{align}
		v\in  L^2(\tau,T;H^2\cap V)\cap C([\tau,T];V)\cap H^1(\tau,T;H),\notag
	\end{align}
	and it satisfies
	\be[weak formulation]
\left( \frac{d}{dt} v,w \right)+&\left( \nu A\left(v+hz(\theta_t\omega)\right),w \right) \notag\\
		=&\left\langle -B\left(\zeta^\epsilon(v+hz(\theta_t\omega)),v+hz(\theta_t\omega)\right),w \right\rangle +\left( f+\gamma hz(\theta_t\omega),w \right)
	\ee	for all $w \in V$.
\end{defn}

\begin{thm}\label{thm:existence v}
	Let $u_0 \in V$, $\omega\in\Omega$, $\tau\in\mathbb{R}$, $T>\tau$, and $f \in L^2(\tau,T;H)$ be given. Then, a unique solution to \eqref{calmed u} exists on $[\tau,T]$ in $V$. Moreover, this solution depends continuously on its initial data.
\end{thm}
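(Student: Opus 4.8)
The plan is to work throughout with the transformed random equation \eqref{calmed OU}, since for each fixed $\omega$ a pathwise solution $v$ of \eqref{calmed OU} yields a solution $u=v+hz(\theta_t\omega)$ of \eqref{calmed u} via the substitution \eqref{transformation}, and conversely; the O-U process $z(\theta_t\omega)$ is a continuous scalar coefficient, so \eqref{calmed OU} is a deterministic random-coefficient parabolic problem to which the classical Galerkin scheme applies. First I would fix $\omega\in\Omega$ and $\tau<T$, and seek Galerkin approximants $v_m(t)=\sum_{j=1}^m g_{jm}(t)w_j=P_m v_m$ solving the projection of \eqref{weak formulation} onto $\operatorname{span}\{w_1,\dots,w_m\}$. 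Writing $w:=v_m+hz(\theta_t\omega)$, this is an ODE system in $(g_{1m},\dots,g_{mm})$ whose right-hand side is $C^1$ (as $\zeta^\epsilon\in C^1$) and locally Lipschitz, so Cauchy-Lipschitz gives a unique local solution; the a priori bounds below extend it to all of $[\tau,T]$.

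The core of the argument is a pair of uniform-in-$m$ estimates. Testing with $v_m$ and using the coercivity $(\nu Av_m,v_m)=\nu\|\nabla v_m\|_{L^2}^2$ gives a bound in $L^\infty(\tau,T;H)\cap L^2(\tau,T;V)$: the boundedness of $\zeta^\epsilon$ (condition (2) of Definition~\ref{def:calming function}) with Young's inequality controls the nonlinear contribution, and the O-U terms are absorbed via \eqref{z polynomial growth}. The decisive estimate is the $V$-level one: testing with $Av_m$ produces $\nu\|Av_m\|_{L^2}^2$ from the viscous term, while the nonlinearity obeys
\begin{align}
	|\langle(\nabla\times w)\times\zeta^\epsilon(w),Av_m\rangle|\le\|\zeta^\epsilon\|_{L^\infty}\|\nabla w\|_{L^2}\|Av_m\|_{L^2}.\notag
\end{align}
Splitting $\|\nabla w\|_{L^2}\le\lambda_1^{-1/2}\|Av_m\|_{L^2}+|z(\theta_t\omega)|\,\|\nabla h\|_{L^2}$ via the Poincar\'e inequality and applying Young's inequality, the leading term carries coefficient $\|\zeta^\epsilon\|_{L^\infty}\lambda_1^{-1/2}$, which is strictly below $\nu$ precisely by assumption (A3), $\|\zeta^\epsilon\|_{L^\infty}<\nu\sqrt{\lambda_1/2}$; this is exactly what lets the nonlinear term be absorbed into the dissipation, closing the estimate and yielding $v_m$ bounded in $L^\infty(\tau,T;V)\cap L^2(\tau,T;H^2\cap V)$ and, reading off the equation, $\partial_t v_m$ bounded in $L^2(\tau,T;H)$. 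I expect this $V$-estimate to be the main obstacle, since it is here that the convective term, unbounded for the genuine 3D problem, is tamed, and the calming mechanism together with (A3) is indispensable.

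With these bounds, Banach-Alaoglu extracts a subsequence converging weakly-$*$ in $L^\infty(\tau,T;V)$, weakly in $L^2(\tau,T;H^2\cap V)$ and in $H^1(\tau,T;H)$ to a limit $v$; the Aubin-Lions-Simon lemma, using the compact embedding $H^2\cap V\hookrightarrow\hookrightarrow V$, upgrades this to strong convergence in $L^2(\tau,T;V)$, with $v\in C([\tau,T];V)$ after modification on a null set. Passing to the limit in the linear terms is immediate; for the nonlinear term I would use the $L^2$-Lipschitz continuity of $\zeta^\epsilon$ (Lemma~\ref{lem:int b is linear}(1)) to control $\zeta^\epsilon(v_m+hz)-\zeta^\epsilon(v+hz)$ by $v_m-v$, and then the continuity of the trilinear map together with Lemma~\ref{lem:int b is linear}(2) to pass to the limit in $\int_\tau^T b(\zeta^\epsilon(v_m+hz),v_m+hz,\phi)\,dt$ for each fixed test function $\phi$. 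This shows $v$ satisfies \eqref{weak formulation}, and undoing \eqref{transformation} gives the claimed solution of \eqref{calmed u}.

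Finally, uniqueness and continuous dependence follow from an energy estimate on the difference $\bar v:=v^1-v^2$ of two solutions with data $v_0^1,v_0^2$. Writing $w^i:=v^i+hz$ and splitting
\begin{align}
	B(\zeta^\epsilon(w^1),w^1)-B(\zeta^\epsilon(w^2),w^2)=B(\zeta^\epsilon(w^1),\bar v)+B\big(\zeta^\epsilon(w^1)-\zeta^\epsilon(w^2),w^2\big),\notag
\end{align}
I would test the difference equation with $\bar v$: the first term is bounded by $\|\zeta^\epsilon\|_{L^\infty}$ and Young's inequality, while the second uses the pointwise bound $|\zeta^\epsilon(w^1)-\zeta^\epsilon(w^2)|\le|\bar v|$, then H\"older's inequality and the Gagliardo-Nirenberg estimate $\|\bar v\|_{L^4}^2\le C\|\bar v\|_{L^2}^{1/2}\|\nabla\bar v\|_{L^2}^{3/2}$, both absorbable into $\nu\|\nabla\bar v\|_{L^2}^2$ up to a factor $C(1+\|\nabla w^2\|_{L^2}^4)$. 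Since $v^2\in C([\tau,T];V)$ this factor is integrable on $[\tau,T]$, so Gronwall's inequality gives $\|\bar v(t)\|_{L^2}^2\le\|\bar v(\tau)\|_{L^2}^2\exp\big(C\int_\tau^t(1+\|\nabla w^2\|_{L^2}^4)\,ds\big)$, which yields uniqueness when $v_0^1=v_0^2$ and continuous dependence in general; the corresponding $V$-norm dependence is obtained analogously by testing with $A\bar v$ and invoking the a priori bounds.
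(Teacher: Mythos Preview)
Your overall architecture matches the paper's proof closely: Galerkin approximation on the transformed equation \eqref{calmed OU}, a $V$-level energy estimate testing with $Av_m$, Aubin--Lions compactness, limit passage in the nonlinearity via the Lipschitz property of $\zeta^\epsilon$, and a Gr\"onwall argument on the difference for uniqueness. The limit passage and uniqueness estimates you sketch are minor variants of the paper's (you end up with a Gr\"onwall factor $\|\nabla w^2\|_{L^2}^4$ using only $L^\infty V$ regularity, whereas the paper uses $\|\Delta u_2\|_{L^2}^{4/3}$ via the $L^2H^2$ bound; both are integrable, so either works).

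There is, however, one genuine discrepancy in the $V$-estimate. You bound the nonlinear term by $\|\zeta^\epsilon\|_{L^\infty}\|\nabla w\|_{L^2}\|Av_m\|_{L^2}$, then invoke Poincar\'e $\|\nabla v_m\|_{L^2}\le\lambda_1^{-1/2}\|Av_m\|_{L^2}$ to produce $\|\zeta^\epsilon\|_{L^\infty}\lambda_1^{-1/2}\|Av_m\|_{L^2}^2$, and claim that (A3) is ``indispensable'' to absorb this into the dissipation. But Theorem~\ref{thm:existence v} does \emph{not} assume (A3); that hypothesis is introduced only for the attractor theory in Section~\ref{sect4}. The paper closes the estimate without it: rather than converting $\|\nabla v_m\|_{L^2}$ into $\|Av_m\|_{L^2}$, it applies Young's inequality the other way,
\[
\|\zeta^\epsilon\|_{L^\infty}\|\nabla v_m\|_{L^2}\|Av_m\|_{L^2}\le \tfrac{\nu}{4}\|Av_m\|_{L^2}^2+C_\nu\|\zeta^\epsilon\|_{L^\infty}^2\|\nabla v_m\|_{L^2}^2,
\]
leaving $\|\nabla v_m\|_{L^2}^2$ on the right and closing by Gr\"onwall with a harmless factor $e^{C_\nu\|\zeta^\epsilon\|_{L^\infty}^2 T}$. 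So mere \emph{boundedness} of $\zeta^\epsilon$ suffices for well-posedness on finite intervals; the smallness in (A3) is needed only later to make the exponent in \eqref{energy inequality} negative for the absorbing-set estimate. Your route is not wrong, but as written it proves the theorem under a hypothesis it does not carry; replacing your absorption step by the Gr\"onwall version fixes this.
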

\begin{proof}
Formally, if $u$ solves \eqref{calmed u}, then $v(t)=u(t)-h(x)z(\theta_t\omega)$ satisfies \eqref{calmed OU} for $t\in[\tau,T]$ with $T>\tau$.
We begin by establishing the existence of solutions to \eqref{calmed OU} through Galerkin approximation.
Let \(\{w_j\}_{j=1}^{\infty}\) be the eigenfunctions of Stokes operator \(A\). Define the finite-dimensional subspace \(V_m={\rm span}\{w_1,w_2,...,w_m\}\subset V\), and consider \(v_m=\sum_{j=1}^{m}(v,w_j) w_j\) and \(h_m=\sum_{j=1}^{m}h_j w_j\) in \(V_m\).
We study the finite-dimensional system with approximate solutions $v_m$ on some maximal existence interval $[\tau,T_m]$ with $T_m>\tau$,
\be\label{galerkin}
\begin{cases}
	\frac{d}{dt}v_m = -\nu A(v_m +h_mz(\theta_t\omega))-P_m B\left(\zeta^\epsilon (v_m+h_mz(\theta_t\omega)),v_m+h_mz(\theta_t\omega)\right)\\
	\qquad\qquad+P_mf+\gamma h_mz(\theta_t\omega)\\
	v_m(x,\tau)=P_m v_0 (x).
\end{cases}
\ee
Since $\zeta^\epsilon$ is Lipschitz, the system \eqref{galerkin} is locally Lipschitz in $V_m$ for $v_0 \in V$. For each $m \in \mathbb{N}$, a unique solution to \eqref{galerkin} exists for some $T_m >\tau$.
We take the inner product of \eqref{galerkin} with $Av_m$ and use H\"{o}lder's inequality and Young's inequality to obtain
\begin{align}
	&\frac{1}{2}\frac{d}{dt}\|\nabla v_m\|_{L^2}^2+\nu	\|Av_m\|_{L^2}^2 \notag\\
	=\ &b\left(\zeta^\epsilon(v_m+h_mz(\theta_t\omega)),v_m+h_mz(\theta_t\omega),Av_m\right)+\left( f+\gamma h_mz(\theta_t\omega),Av_m\right)-\nu\left(z(\theta_t\omega)Ah_m,Av_m\right)\notag\\
	\leq\ &\frac{\nu}{2}\|Av_m\|_{L^2}^2+C_\nu\|\zeta^\epsilon\|_{L^\infty}^2\|\nabla v_m\|_{L^2}^2+C_\nu\left(\|\zeta^\epsilon\|_{L^\infty}^2|z(\theta_t\omega)|^2\|\nabla h_m\|_{L^2}^2+\|f\|_{L^2}^2\right.\notag\\
	&\left.+|z(\theta_t\omega)|^2\|h_m\|_{L^2}^2+\nu^2|z(\theta_t\omega)|^2\|\Delta h_m\|_{L^2}^2\vphantom{\|\zeta^\epsilon\|_{L^\infty}^2}\right)\notag\\
	\leq\ &\frac{\nu}{2}\|Av_m\|_{L^2}^2+C_\nu\|\zeta^\epsilon\|_{L^\infty}^2\|\nabla v_m\|_{L^2}^2+C_\nu\|f\|_{L^2}^2+C_{\nu,\epsilon}|z(\theta_t\omega)|^2. \notag
\end{align}
Rearranging these terms yields
\be [after inequality]
\frac{d}{dt}\|\nabla v_m\|_{L^2}^2+\nu\|Av_m\|_{L^2}^2 \leq C_\nu\|\zeta^\epsilon\|_{L^\infty}^2\|\nabla v_m\|_{L^2}^2+C_{\nu,\epsilon}|z(\theta_t\omega)|^2+C_\nu\|f\|_{L^2}^2.
\ee
After omitting the term $\nu \|A v_m\|_{L^2}^2$, we apply Gr\"{o}nwall's inequality to obtain
\begin{align} \label{after gronwall}
\|\nabla v_m\|_{L^2}^2
\leq & \|\nabla v_0\|_{L^2}^2 e^{C_\nu\|\zeta^\epsilon\|_{L^\infty}^2T_m} + \int_{\tau}^{t}e^{-C_\nu\|\zeta^\epsilon\|_{L^\infty}^2(s-t)}\left[C_\nu\|f(s)\|_{L^2}^2+C_{\nu,\epsilon}\left|z(\theta_s\omega)\right|^2\right]ds,
\end{align}
for a.e. $t\in[\tau,T_m]$.
In fact, for any given \(T>\tau\), \eqref{after gronwall} holds for all $m \in \mathbb{N}$ when $T_m=T$ by a standard bootstrapping argument \cite{Tao2006NonlinearDE}. Furthermore,  $v_m$ is uniformly bounded in $L^{\infty}(\tau,T;V)$ since $z(\theta_t \omega)$ is continuous with respect to $t$ and \(f\in L^2(\tau,T;H)\). We integrate \eqref{after inequality} with respect to time $t$ over the interval $[\tau,T]$,
\be [4.3]
\nu\int_{\tau}^{T}\|Av_m\|_{L^2}^2dt
\leq\ &\|\nabla v_0\|_{L^2}^2+C_{\nu,\epsilon}\int_{\tau}^{T}(\|\nabla v_m\|_{L^2}^2+\left|z(\theta_t\omega)\right|^2)dt	+C_\nu\|f\|_{L^2L^2}^2.
\ee
Therefore, from \eqref{after gronwall} and \eqref{4.3}, we deduce that $v_m$ is bounded in $L^2(\tau,T;H^2\cap V)$ independently of $m$.

Next we show that $\frac{d}{dt}v_m\in L^2(\tau,T;H)$. Integrating \eqref{galerkin} in time $[\tau,T]$ yields
\be
&\int_{\tau}^{T}\left\|\frac{d}{dt}v_m\right\|_{L^2}^2dt\\		
=\
&\int_{\tau}^{T}\|-\nu A\left(v_m+h_mz(\theta_t\omega)\right)-P_mB(\zeta^\epsilon(v_m+h_mz(\theta_t\omega)),v_m+h_mz(\theta_t\omega))\\
 &\qquad+P_mf+\gamma h_mz(\theta_t\omega)\|_{L^2}^2dt\notag\\
\leq\
&C_{\nu,\epsilon}\int_{\tau}^{T}\left( \|Av_m\|_{L^2}^2+|z(\theta_t\omega)|^2+\|\nabla v_m\|_{L^2}^2 +\|f\|_{L^2}^2\right)dt\notag\\
<\ &\infty.
\ee
Therefore, $\frac{d}{dt}v_m\in L^2(\tau,T;H)$ holds.

Combining above bounds and applying the Alaoglu weak-star compactness theorem, we conclude that there exists $v \in L^{\infty}(\tau,T;V) \cap L^2(\tau,T;H^2\cap V)$ and a subsequence which still labeled as $v_m$ such that
\begin{align}
	v_m &\to v \ \text{weak star in}\ L^{\infty}(\tau,T;V), \label{3.6}\\
	v_m &\to v \ \text{weakly in}\ L^2(\tau,T;H^2\cap V), \label{3.7}\\
	\frac{d}{dt} v_m &\to \frac{d}{dt}v \ \text{weakly in}\ L^{2}(\tau,T;H), \label{3.8}
\end{align}
Using the Aubin-Lions lemma and the embedding $H^2\cap V \hookrightarrow V \hookrightarrow H$, we obtain a subsequence (still labeled as) $v_m$ such that
\begin{align}
	v_m \to v \ \text{strongly in}\ L^2(\tau,T;V).\notag
\end{align}

Next we prove that $v$ satisfies \eqref{weak formulation}. Let $w \in V$, set $\alpha_m=v-v_m$, our goal is to show that
\be[a1]
	&\left( \frac{d}{dt}v ,w\right) +  b\left(\zeta^\epsilon\left(v+hz(\theta_t\omega)\right),v+hz(\theta_t\omega),w\right)  +\left(\nu\nabla \left(v+hz(\theta_t\omega)\right),\nabla w \right)\\
	&+\left (f+\gamma hz(\theta_t\omega),w \right) -
	\left( \frac{d}{dt}v_m,w \right)- b\left(\zeta^\epsilon\left(v_m+h_mz(\theta_t\omega)\right),v_m+h_mz(\theta_t\omega),P_mw \right)\\
	&-\left( \nu\nabla (v_m+h_mz(\theta_t\omega)),\nabla w \right)-\left( P_mf+\gamma h_mz(\theta_t\omega),w \right)
\ee
tends to 0 as $m \to \infty$. We rewrite \eqref{a1} as follows:
\be
	&\left( \frac{d}{dt}\alpha_m,w \right)+\nu ( \nabla( \alpha_m+hz(\theta_t\omega)-h_mz(\theta_t\omega)), \nabla w)\\ &+b(\zeta^\epsilon(v+hz(\theta_t\omega)),\alpha_m+hz(\theta_t\omega)-h_mz(\theta_t\omega),w)\\
	&+b(\zeta^\epsilon(v+hz(\theta_t\omega)),v_m+h_mz(\theta_t\omega),w)
-b(\zeta^\epsilon(v_m+h_mz(\theta_t\omega)),v_m+h_mz(\theta_t\omega),w) \\
&+b(\zeta^\epsilon(v_m+h_mz(\theta_t\omega)),v_m
+h_mz(\theta_t\omega),(I-P_m)w)+\left( (I-P_m)f+\gamma (h-h_m)z(\theta_t\omega),w\right).
\ee
From \eqref{3.7}, \eqref{3.8} and Lemma~\ref{lem:int b is linear}, we obtain
\begin{align}
	\lim\limits_{m\to \infty}\int_{\tau}^{T} \left( \frac{d}{dt} \alpha_m,w \right)+\nu( \nabla (\alpha_m+hz(\theta_t\omega)-h_mz(\theta_t\omega)),\nabla w ) dt=0, \notag
\end{align}
and
\begin{align}
	\lim\limits_{m \to \infty} \int_{\tau}^{T}b(\zeta^\epsilon(v+hz(\theta_t\omega)),\alpha_m+hz(\theta_t\omega)-h_mz(\theta_t\omega),w)dt=0. \notag
\end{align}
Now using the Gagliardo-Nirenberg-Sobolev inequality, H\"{o}lder's inequality and the properties of $\zeta^\epsilon$, we obtain
\be
&\int_{\tau}^{T}\left(b(\zeta^\epsilon(v+hz(\theta_t\omega)),
v_m+h_mz(\theta_t\omega),w)-b(\zeta^\epsilon(v_m+h_mz(\theta_t\omega)),v_m+h_mz(\theta_t\omega),w)\right)dt \\
\leq\
&\int_{\tau}^{T} \|\alpha_m+hz(\theta_t\omega)-h_mz(\theta_t\omega)\|_{L^3}\|\nabla \left(v_m+h_mz(\theta_t\omega)\right)\|_{L^2}\|w\|_{L^6}dt\\
	\leq\
	&C\int_{\tau}^{T}\|\alpha_m+hz(\theta_t\omega)-h_mz(\theta_t\omega)\|_{L^2}^{\frac{1}{2}}\|\nabla (\alpha_m+hz(\theta_t\omega)-h_mz(\theta_t\omega))\|_{L^2}^{\frac{1}{2}}\\
	&\cdot\|\nabla (v_m+h_mz(\theta_t\omega))\|_{L^2}\|\nabla w\|_{L^2}dt\\
	\leq\
	&C\|\nabla w\|_{L^2}\|\alpha_m+hz(\theta_t\omega)-h_mz(\theta_t\omega)\|_{L^2 L^2}^{\frac{1}{2}}\|\nabla (\alpha_m+hz(\theta_t\omega)-h_mz(\theta_t\omega))\|_{L^2 L^2}^{\frac{1}{2}}\\
	&\cdot\|\nabla \left(v_m+h_mz(\theta_t\omega)\right)\|_{L^2 L^2}.
\ee
Since $\|\nabla (v_m+h_mz(\theta_t\omega))\|_{L^2 L^2}$ is bounded, $v_m\to v$ strongly in $L^2(\tau,T;V)$ and \(L^2(\tau,T;V)\) is continuously embedded in \(L^2(\tau,T;H)\), we have
\begin{align}
	\lim\limits_{m \to \infty}\int_{\tau}^{T}&\left(b\left(\zeta^\epsilon\left(v+hz(\theta_t\omega)\right),v_m+h_mz(\theta_t\omega),w\right)\right.\notag\\
	&\left.-b\left(\zeta^\epsilon\left(v_m+h_mz(\theta_t\omega)\right),v_m+h_mz(\theta_t\omega),w\right)\right)dt=0.\notag
\end{align}
By \eqref{I-Pm}, we get
\begin{align}
	&\lim\limits_{m\to \infty}\left|\int_{\tau}^{T}b\left(\zeta^\epsilon(v_m+h_mz(\theta_t\omega)),v_m+h_mz(\theta_t\omega),(I-P_m)w\right)dt\right|\notag\\
	\leq &\lim\limits_{m\to \infty}\|\zeta^\epsilon\|_{L^\infty}\lambda_m^{-\frac{1}{2}}\|w\|_{H^1} \int_{\tau}^{T}\|\nabla(v_m+h_mz(\theta_t\omega))\|_{L^2}dt \notag\\
	\leq &\lim\limits_{m\to \infty}\|\zeta^\epsilon\|_{L^\infty}\lambda_m^{-\frac{1}{2}}\|w\|_{H^1} T\sup_{m \in \mathbb{N}}\|v_m+h_mz(\theta_t\omega)\|_{L^2 V} \notag\\
	= &\ 0, \notag
\end{align}
and
\begin{align}
	\lim\limits_{m \to \infty}\int_{\tau}^{T} \left( (I-P_m)f+\gamma (h-h_m)z(\theta_t\omega),w\right) dt = 0, \notag
\end{align}
hence passing the limit in \eqref{galerkin} yields \eqref{weak formulation}. From these estimates above we conclude that a subsequence of solutions $v_m$ to \eqref{galerkin} converges to a solution $v$ to \eqref{calmed OU}. We must verify that $v$ is time-continuous and satisfies the initial condition. This follows directly from the Aubin-Lions Compactness Theorem(cf.\cite{robinson2001infinite}) that $v\in C([\tau,T];V)$.
\par
Now, we show that $v$ satisfies the initial data. For all $w \in V$, we have
\be [3.13]
	\left( \frac{d}{dt} v,w\right)=\frac{d}{dt}(v,w)
	=&-\left\langle B\left(\zeta^\epsilon(v+hz(\theta_t\omega)),v+hz(\theta_t\omega)\right),w\right\rangle\notag\\
	&+\nu\left(\Delta\left(v+hz(\theta_t\omega)\right),w\right)+\left(f+\gamma hz(\theta_t\omega),w\right).
\ee
Let $\psi \in C^1\left([\tau,T]\right)$ be a test function such that $\psi(\tau)=1$ and $\psi(T)=0$. Multiplying \eqref{3.13} by $\psi$ and integrating over $[\tau,T]$, we then apply integration by parts to obtain
\be[3.14]	-\int_{\tau}^{T}(v,w)\psi'(t)dt=&-\int_{\tau}^{T}\langle B(\zeta^\epsilon(v+hz(\theta_t\omega)),v+hz(\theta_t\omega)),w\rangle \psi(t)dt  \notag\\&+\nu\int_{\tau}^{T}\left(\Delta\left(v+hz(\theta_t\omega)\right),w\right) \psi(t)dt\notag\\&+\int_{\tau}^{T}(f+\gamma hz(\theta_t\omega),w)\psi(t)dt +(v(\tau),w).
\ee
On the other hand, by taking the inner product of \eqref{galerkin} with $w$ and integrating in time with $\psi$, we obtain
\begin{align}
	-\int_{\tau}^{T}(v_m,w)\psi'(t)dt
	=&-\int_{\tau}^{T}\left\langle P_mB\left(\zeta^\epsilon\left(v_m+h_mz(\theta_t\omega)\right),v_m+h_mz(\theta_t\omega)\right),w\right\rangle\psi(t)dt\notag\\
	&-\nu\int_{\tau}^{T}\left(A(v_m+h_mz(\theta_t\omega)),w\right)\psi(t)dt  \notag\\
	&+\int_{\tau}^{T}\left(P_mf+\gamma h_mz(\theta_t\omega),w\right)\psi(t)dt+(v_{0}^m(x),w). \notag
\end{align}
Taking the limit as $m \to \infty$ subsequently results in
\be[3.15]
	-\int_{\tau}^{T}(v,w)\psi'(t)dt=&-\int_{\tau}^{T}\left\langle B\left(\zeta^\epsilon(v+hz(\theta_t\omega)),v+hz(\theta_t\omega)\right),w\right\rangle\psi(t)dt \notag\\&-\nu\int_{\tau}^{T}\left(A(v+hz(\theta_t\omega)),w\right)\psi(t)dt \notag\\&+\int_{\tau}^{T}(f+\gamma hz(\theta_t\omega),w)\psi(t)dt+(v_0(x),w).
\ee
Comparing \eqref{3.14} with \eqref{3.15}, we obtain
$$(v(\tau)-v_0(x),w)=0,\ \text{for all}\ w\in V,$$
which implies that $v$ satisfies $v(\tau)=v_0(x)$.
Thus, $v\in  L^2(\tau,T;H^2\cap V)\cap C([\tau,T];V)\cap H^1(\tau,T;H)$ with
$v_0\in V$ is a solution to \eqref{calmed OU}.
Let $u(t)=v(t)+h(x)z(\theta_t\omega)$ for $t\in[\tau,T]$ for any fixed $T>\tau$. Then we find that $u$ is continuous in time. In addition, it follows from \eqref{calmed OU} that $u$ is a solution to \eqref{calmed u}.\par
Now we prove that the solution \(u\) is unique.
Set $\tilde{u}=u_1-u_2$, $\tilde{u_0}=u_0^1-u_0^2$, where $u_1$ and $u_2$ are solutions of \eqref{calmed u} on \([\tau,T]\) with  identical initial data $u_0^1=u_0^2$. After taking the difference equation with $\tilde{u}$, we have
\begin{align}\label{3.16}
	\frac{1}{2}\frac{d}{dt}\|\tilde{u}\|_{L^2}^2+\nu\|\nabla \tilde{u}\|_{L^2}^2
	=b(\zeta^\epsilon(u_2)-\zeta^\epsilon(u_1),u_2,\tilde{u})-b(\zeta^\epsilon(u_1),\tilde{u},\tilde{u}).
\end{align}
Then we use condition 1 of Definition~\ref{def:calming function}, H\"{o}lder's inequality, the Gagliardo-Nirenberg-Sobolev inequality and Poincar\'e's inequality,
\begin{align}
	|b(\zeta^\epsilon(u_2)-\zeta^\epsilon(u_1),u_2,\tilde{u})|=&\|\tilde{u}\|_{L^3}\|\nabla u_2\|_{L^6}\|\tilde{u}\|_{L^2} \notag\\
	\leq &\ C\|\tilde{u}\|_{L^2}^{\frac{3}{2}}\|\nabla \tilde{u}\|_{L^2}^\frac{1}{2}\|\Delta u_2\|_{L^2}\notag\\
	\leq &\ C_\nu\|\Delta u_2\|_{L^2}^\frac{4}{3}\|\tilde{u}\|_{L^2}^2+\frac{\nu}{4}\|\nabla\tilde{u}\|_{L^2}^2.\notag
\end{align}
Also for the second term on the right-hand side of \eqref{3.16}, we have
\begin{align}
	|b(\zeta^\epsilon(u_1),\tilde{u},\tilde{u})|
	\leq C_{\nu,\epsilon}\|\tilde{u}\|_{L^2}^2+\frac{\nu}{4}\|\nabla\tilde{u}\|_{L^2}^2.\notag
\end{align}
Combining the above estimates, we obtain
\begin{align}
	\frac{d}{dt}\|\tilde{u}\|_{L^2}^2+\nu\|\nabla\tilde{u}\|_{L^2}^2
	\leq  C_{\nu,\epsilon}\|\Delta u_2\|_{L^2}^\frac{4}{3}\|\tilde{u}\|_{L^2}^2.\notag
\end{align}
Since $u_2$ is a solution and $u_2\in L^2(\tau,T;H^2\cap V)$, we obtain
\begin{align}
	C_{\nu,\epsilon}\int_{\tau}^{T}\|\Delta u_2\|_{L^2}^\frac{4}{3}dt<\infty,
\end{align}
then we use Gr\"{o}nwall's inequality,
\begin{align}
	\|\tilde{u}\|_{L^2}^2\leq \exp\left( C_{\nu,\epsilon}\int_{\tau}^{T}\|\Delta u_2\|_{L^2}^\frac{4}{3}dt\right)\|\tilde{u_0}\|_{L^2}^2,
\end{align}
which implies that solutions to \eqref{calmed u} are unique and depend continuously on the initial data.
\end{proof}

\section{Pullback attractors}\label{sect4}
From section \ref{sect3}, we know that for every \(\tau\in\mathbb{R}\), \(u_\tau\in V\), and \(\omega\in\Omega\), equation \eqref{calmed u} has a unique solution \(u(\cdot, \tau, \omega, u_\tau)\). Define a continuous cocycle on $V$ over  $(\Omega,\mathcal{F},\mathbb{P},\left\{\theta_t\right\}_{t\in\mathbb{R}})$ as $\Phi:\mathbb{R}^+\times\mathbb{R}\times\Omega\times V\to V$ such that, for every $t\in\mathbb{R}^+, \ \tau\in\mathbb{R},\ \omega\in\Omega$ and $u_\tau\in V,$
$$\Phi(t,\tau,\omega,u_\tau)=u(t+\tau,\tau,\theta_{-\tau}\omega,u_\tau)
=v(t+\tau,\tau,\theta_{-\tau}\omega,v_\tau)+hz(\theta_t\omega),$$ where $v_\tau=u_\tau-hz(\omega)$.

In this section, we denote by \(\mathcal{D}_V=\{D=\{ D(\tau,\omega)\subseteq V:D(\tau,\omega)\neq\emptyset,\tau\in\mathbb{R},\omega\in\Omega\}\}\) the collection of some families of nonempty subsets of $V$ parameterized by $\tau\in\mathbb{R}$ and $\omega\in\Omega$. We first prove uniform estimates on the solutions of problem \eqref{calmed OU}. Next, we prove that the random dynamical system $\Phi$ associated with problem \eqref{calmed u} has a $\mathcal{D}_V$-pullback absorbing set. By proving that the cocycle \(\Phi\) is $\mathcal{D}_V$-pullback flattening, Proposition~\ref{prop:pullback attractor} implies the existence of a $\mathcal{D}_V$-pullback attractor for $\Phi$. For simplicity, we denote by \(M_\epsilon:=\|\zeta^\epsilon\|_{L^\infty}\).

\subsection{Construction of $\mathcal{D}_V$-pullback absorbing set}
\begin{lem}\label{lem:uniform estimates}
	For every \(\tau\in\mathbb{R}\), \(\omega\in\Omega\) and \(D=\{D(\tau,\omega):\tau\in\mathbb{R},\omega\in\Omega\}\in\mathcal{D}_V\), there exists \(T_1=T_1(\tau,\omega,D)>0\) such that, for all \(t\geq T_1\) and \(s\geq \tau-t\), the solution to \eqref{calmed OU} satisfies
	\be [nabla v estimate]
		\|\nabla v(s,\tau-t,\theta_{-\tau}\omega,v_{\tau-t})\|_{L^2}^2
		\leq
		&\ C_{\nu,\epsilon}e^{-\left(\nu\lambda_1-\frac{2M_\epsilon^2}{\nu}\right)\left(s-\tau\right)}\left[1 +\int_{-\infty}^{s-\tau}e^{\left(\nu\lambda_1-\frac{2M_\epsilon^2}{\nu}\right)r}\|f(r+\tau,\cdot)\|_{L^2}^2dr \vphantom{\int_{-\infty}^{s-\tau}}\right. \\
		&\
		\left.+\int_{-\infty}^{s-\tau}e^{\left(\nu\lambda_1-\frac{2M_\epsilon^2}{\nu}\right)r}|z(\theta_r\omega)|^2dr \vphantom{\int_{-\infty}^{s-\tau}}\right],
	\ee
	where \(v_{\tau-t}\in D(\tau-t,\theta_{-t}\omega)\).
\end{lem}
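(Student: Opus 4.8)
The plan is to obtain a pointwise-in-time differential inequality for $\|\nabla v(s)\|_{L^2}^2$ by testing the transformed equation \eqref{calmed OU} with $Av$, then to integrate it via the Gr\"onwall inequality, and finally to use the temperedness of $D$ to control the initial datum. The regularity $v\in L^2(\tau,T;H^2\cap V)\cap C([\tau,T];V)$ with $\frac{d}{dt}v\in L^2(\tau,T;H)$ established in Theorem~\ref{thm:existence v} justifies the pairing $(\frac{d}{dt}v,Av)=\frac12\frac{d}{dt}\|\nabla v\|_{L^2}^2$; as in the existence proof I would carry out the estimate on the Galerkin system \eqref{galerkin} and pass to the limit. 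Writing $u=v+hz(\theta_t\omega)$ and using $A=-P_\sigma\Delta$ together with the self-adjointness of $A$, this yields the energy identity
\[
\frac12\frac{d}{dt}\|\nabla v\|_{L^2}^2+\nu\|Av\|_{L^2}^2=-b\bigl(\zeta^\epsilon(u),u,Av\bigr)-\nu z(\theta_t\omega)(Ah,Av)+\bigl(f+\gamma hz(\theta_t\omega),Av\bigr).
\]

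The crux is the nonlinear term, and this is exactly where the calming function does its work. Using $b(\zeta^\epsilon(u),u,Av)=((\nabla\times u)\times\zeta^\epsilon(u),Av)$ together with the uniform bound $\|\zeta^\epsilon\|_{L^\infty}=M_\epsilon$ from Definition~\ref{def:calming function}(2) and H\"older's inequality, I would estimate
\[
|b(\zeta^\epsilon(u),u,Av)|\le M_\epsilon\|\nabla u\|_{L^2}\|Av\|_{L^2};
\]
no comparable bound is available for the original equation \eqref{original}, where the convective velocity carries no $L^\infty$ control, and this is precisely the obstruction removed by calming. A Young inequality then trades part of $\nu\|Av\|_{L^2}^2$ for a multiple of $\|\nabla u\|_{L^2}^2$, and the splitting $\|\nabla u\|_{L^2}^2\le 2\|\nabla v\|_{L^2}^2+2|z(\theta_t\omega)|^2\|\nabla h\|_{L^2}^2$ produces a term proportional to $\|\nabla v\|_{L^2}^2$ together with a harmless $|z(\theta_t\omega)|^2$ contribution. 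The two linear terms are handled by Cauchy--Schwarz and Young (using $h\in\mathcal{D}(A)$), giving $C_\nu\|f\|_{L^2}^2+C_{\nu,\epsilon}|z(\theta_t\omega)|^2$. Choosing the Young parameters so that the residual dissipation survives and applying the Poincar\'e inequality $\|Av\|_{L^2}^2\ge\lambda_1\|\nabla v\|_{L^2}^2$, I arrive at
\[
\frac{d}{dt}\|\nabla v\|_{L^2}^2+\Bigl(\nu\lambda_1-\tfrac{2M_\epsilon^2}{\nu}\Bigr)\|\nabla v\|_{L^2}^2\le C_\nu\|f(t)\|_{L^2}^2+C_{\nu,\epsilon}|z(\theta_t\omega)|^2.
\]
Assumption (A3), $M_\epsilon<\nu\sqrt{\lambda_1/2}$, is tailored precisely so that the decay rate $\sigma:=\nu\lambda_1-\frac{2M_\epsilon^2}{\nu}$ is strictly positive.

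With $\sigma>0$, I would apply the Gr\"onwall inequality on $[\tau-t,s]$ to the solution started at time $\tau-t$ from $v_{\tau-t}$ with driving noise $\theta_{-\tau}\omega$ (so that the noise entering the source at time $r$ is $z(\theta_{r-\tau}\omega)$), obtaining
\[
\|\nabla v(s)\|_{L^2}^2\le e^{-\sigma(s-\tau+t)}\|\nabla v_{\tau-t}\|_{L^2}^2+\int_{\tau-t}^{s}e^{-\sigma(s-r)}\bigl(C_\nu\|f(r)\|_{L^2}^2+C_{\nu,\epsilon}|z(\theta_{r-\tau}\omega)|^2\bigr)\,dr.
\]
Substituting $r\mapsto r+\tau$ and enlarging the lower limit from $-t$ to $-\infty$ turns the integral into $C_{\nu,\epsilon}e^{-\sigma(s-\tau)}\int_{-\infty}^{s-\tau}e^{\sigma r}\bigl(\|f(r+\tau)\|_{L^2}^2+|z(\theta_r\omega)|^2\bigr)\,dr$, reproducing the two integral terms of \eqref{nabla v estimate}. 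For the initial datum, $e^{-\sigma(s-\tau+t)}\|\nabla v_{\tau-t}\|_{L^2}^2=e^{-\sigma(s-\tau)}\,e^{-\sigma t}\|\nabla v_{\tau-t}\|_{L^2}^2$; since $v_{\tau-t}\in D(\tau-t,\theta_{-t}\omega)$ with $D\in\mathcal{D}_V$ tempered and $z(\theta_t\omega)$ of sub-exponential growth by \eqref{z polynomial growth}, the factor $e^{-\sigma t}\|\nabla v_{\tau-t}\|_{L^2}^2\to0$ as $t\to\infty$, so there exists $T_1=T_1(\tau,\omega,D)$ with $e^{-\sigma t}\|\nabla v_{\tau-t}\|_{L^2}^2\le1$ for all $t\ge T_1$. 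Collecting the constants into a single $C_{\nu,\epsilon}$ then yields \eqref{nabla v estimate}.

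I expect the main obstacle to be conceptual rather than technical: that a $V$-level (rather than $H$-level) estimate closes at all hinges entirely on the uniform $L^\infty$ bound supplied by the calming function, which caps the nonlinearity so that viscous dissipation dominates and produces the rate $\sigma$ tied to (A3). A secondary, more bookkeeping-type difficulty is the pullback/temperedness step: one must check that replacing the initial time by $\tau-t$ and the sample point by $\theta_{-t}\omega$ still lets the temperedness of $D$, combined with the sub-exponential growth of the Ornstein--Uhlenbeck process, kill the initial-data contribution uniformly in $s\ge\tau-t$.
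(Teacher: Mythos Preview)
Your proposal is correct and follows essentially the same route as the paper: test \eqref{calmed OU} with $Av$, use the $L^\infty$ bound on $\zeta^\epsilon$ together with Young and Poincar\'e to obtain the differential inequality with rate $\sigma=\nu\lambda_1-\frac{2M_\epsilon^2}{\nu}>0$ (guaranteed by (A3)), integrate on $[\tau-t,s]$, shift $\omega\mapsto\theta_{-\tau}\omega$, change variables $r\mapsto r+\tau$, extend the lower limit to $-\infty$, and absorb the initial datum via temperedness of $D$. Your write-up is in fact slightly more explicit than the paper's (you spell out the justification via Galerkin, the splitting $\|\nabla u\|_{L^2}^2\le2\|\nabla v\|_{L^2}^2+2|z(\theta_t\omega)|^2\|\nabla h\|_{L^2}^2$, and the role of (A3)), but the argument is identical in substance.
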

\begin{proof}
	Taking the action of \eqref{weak formulation} with \(Av\) and then using Young's inequality and H\"{o}lder's inequality, we have
	\begin{align}
		\frac{d}{dt}\|\nabla v\|_{L^2}^2 +\nu\|Av\|_{L^2}^2
		\leq
		\frac{2M_\epsilon^2}{\nu}\|\nabla v\|_{L^2}^2 +\frac{2C}{\nu}\|f\|_{L^2}^2 +\frac{CM_\epsilon^2}{\nu}|z(\theta_t\omega)|^2. \notag
	\end{align}
	By the Poincar\'{e} inequalities, we have
	\begin{align}\label{energy inequality}
		\frac{d}{dt}\|\nabla v\|_{L^2}^2 +\left(\nu\lambda_1-\frac{2M_\epsilon^2}{\nu}\right)\|\nabla v\|_{L^2}^2
		\leq
	 	\frac{2C}{\nu}\|f\|_{L^2}^2 +\frac{CM_\epsilon^2}{\nu}|z(\theta_t\omega)|^2.
	\end{align}
	Multiplying \eqref{energy inequality} by \(e^{\left(\nu\lambda _1-\frac{2M_\epsilon^2}{\nu}\right)t}\), we obtain
	\begin{align}
		\frac{d}{dt}\left(e^{\left(\nu\lambda_1-\frac{2M_\epsilon^2}{\nu}\right)t}\|\nabla v\|_{L^2}^2\right)
		\leq
		&e^{\left(\nu\lambda_1-\frac{2M_\epsilon^2}{\nu}\right)t}\left[\frac{2C}{\nu}\|f\|_{L^2}^2 +\frac{CM_\epsilon^2}{\nu}|z(\theta_t\omega)|^2\right].\notag
	\end{align}
	Then integrating the inequality on \([\tau-t,s]\), we obtain
	\begin{align}
		\|\nabla v(s,\tau-t,\omega,v_{\tau-t})\|_{L^2}^2
		\leq
		&\ e^{\left(\nu\lambda_1-\frac{2M_\epsilon^2}{\nu}\right)\left(\tau-s\right)} e^{-\left(\nu\lambda_1-\frac{2M_\epsilon^2}{\nu}\right)t}\|\nabla v_{\tau-t}\|_{L^2}^2 \notag\\
		&
		+\frac{2C}{\nu}e^{-\left(\nu\lambda_1-\frac{2M_\epsilon^2}{\nu}\right)s}\int_{\tau-t}^{s}e^{\left(\nu\lambda_1-\frac{2M_\epsilon^2}{\nu}\right)r}\|f(r,\cdot)\|_{L^2}^2dr \notag\\
		&
		+\frac{CM_\epsilon^2}{\nu}e^{-\left(\nu\lambda_1-\frac{2M_\epsilon^2}{\nu}\right)s}\int_{\tau-t}^{s}e^{\left(\nu\lambda_1-\frac{2M_\epsilon^2}{\nu}\right)r}|z(\theta_{r}\omega)|^2dr. \notag
	\end{align}
	Replacing \(\omega\) with \(\theta_{-\tau}\omega\), we obtain
	\be[estimate1]
		\|\nabla v(s,\tau-t,\theta_{-\tau}\omega,v_{\tau-t})\|_{L^2}^2
		\leq
		&\
		e^{\left(\nu\lambda_1-\frac{2M_\epsilon^2}{\nu}\right)\left(\tau-s\right)} e^{-\left(\nu\lambda_1-\frac{2M_\epsilon^2}{\nu}\right)t}\|\nabla v_{\tau-t}\|_{L^2}^2 \notag\\
		&\ +\frac{2C}{\nu}e^{-\left(\nu\lambda_1-\frac{2M_\epsilon^2}{\nu}\right)s}\int_{\tau-t}^{s}e^{\left(\nu\lambda_1-\frac{2M_\epsilon^2}{\nu}\right)r}\|f(r,\cdot)\|_{L^2}^2dr \notag\\
		&\ +\frac{CM_\epsilon^2}{\nu}e^{-\left(\nu\lambda_1-\frac{2M_\epsilon^2}{\nu}\right)s}\int_{\tau-t}^{s}e^{\left(\nu\lambda_1-\frac{2M_\epsilon^2}{\nu}\right)r}|z(\theta_{r-\tau}\omega)|^2dr \notag\\
		\leq
		&\ e^{\left(\nu\lambda_1-\frac{2M_\epsilon^2}{\nu}\right)\left(\tau-s\right)} e^{-\left(\nu\lambda_1-\frac{2M_\epsilon^2}{\nu}\right)t}\|\nabla v_{\tau-t}\|_{L^2}^2 \notag\\
		&\
		+\frac{2C}{\nu}e^{-\left(\nu\lambda_1-\frac{2M_\epsilon^2}{\nu}\right)s}\int_{-t}^{s-\tau}e^{\left(\nu\lambda_1-\frac{2M_\epsilon^2}{\nu}\right)(r+\tau)}\|f(r+\tau,\cdot)\|_{L^2}^2dr \notag\\
		&\
		+\frac{CM_\epsilon^2}{\nu}e^{-\left(\nu\lambda_1-\frac{2M_\epsilon^2}{\nu}\right)s}\int_{-t}^{s-\tau}e^{\left(\nu\lambda_1-\frac{2M_\epsilon^2}{\nu}\right)(r+\tau)}|z(\theta_r\omega)|^2dr \notag\\
		\leq
		&\
		e^{\left(\nu\lambda_1-\frac{2M_\epsilon^2}{\nu}\right)\left(\tau-s\right)} e^{-\left(\nu\lambda_1-\frac{2M_\epsilon^2}{\nu}\right)t}\|\nabla v_{\tau-t}\|_{L^2}^2 \notag\\
		&\
		+\frac{2C}{\nu}e^{-\left(\nu\lambda_1-\frac{2M_\epsilon^2}{\nu}\right)\left(s-\tau\right)}\int_{-\infty}^{s-\tau}e^{\left(\nu\lambda_1-\frac{2M_\epsilon^2}{\nu}\right)r}\|f(r+\tau,\cdot)\|_{L^2}^2dr \notag\\
		&\
		+\frac{CM_\epsilon^2}{\nu}e^{-\left(\nu\lambda_1-\frac{2M_\epsilon^2}{\nu}\right)\left(s-\tau\right)}\int_{-\infty}^{s-\tau}e^{\left(\nu\lambda_1-\frac{2M_\epsilon^2}{\nu}\right)r}|z(\theta_r\omega)|^2dr.
	\ee
	Then we estimate the terms on the right-hand side of \eqref{estimate1} one by one.
	Since \(v_{\tau-t}\in D(\tau-t,\theta_{-t}\omega)\) and \(D\in\mathcal{D}_V\), we have
	\begin{align}
		e^{-\left(\nu\lambda_1-\frac{2M_\epsilon^2}{\nu}\right)t}\|\nabla v_{\tau-t}\|_{L^2}^2\to 0, \ \ \ \text{as}\ t\to+\infty,\notag
	\end{align}
	which yields that there exists \(T_1=T_1(\tau,\omega,D)\) such that for all \(t\geq T_1\) and \(s\geq \tau-t\),
	\begin{align}\label{estimate2}
		e^{-\left(\nu\lambda_1-\frac{2M_\epsilon^2}{\nu}\right)t}\|\nabla v_{\tau-t}\|_{L^2}^2\leq1.
	\end{align}
	By (A1) and (A3), we have for all \(t\geq T_1\), \(\tau\in\mathbb{R}\), \(s\geq\tau-t\), \(\omega\in\Omega\),
	\begin{align}\label{estimate3}
		\int_{-\infty}^{s-\tau}e^{\left(\nu\lambda_1-\frac{2M_\epsilon^2}{\nu}\right)r}\|f(r+\tau,\cdot)\|_{L^2}^2dr<+\infty,
	\end{align}
	and by \eqref{z polynomial growth}, we have
	\begin{align}\label{estimate4}
		\int_{-\infty}^{s-\tau}e^{\left(\nu\lambda_1-\frac{2M_\epsilon^2}{\nu}\right)r}|z(\theta_r\omega)|^2dr<+\infty.
	\end{align}
	Combining \eqref{estimate1}, \eqref{estimate2}, \eqref{estimate3}, and \eqref{estimate4}, the proof is complete.
\end{proof}
Lemma~\ref{lem:uniform estimates} directly implies the existence of $\mathcal{D}_V$-pullback absorbing sets for the cocycle $\Phi$.
\begin{lem}\label{lem:absorbing set}
	The cocycle $\Phi$ has a closed measurable $\mathcal{D}_V$-pullback absorbing set \(K_V\in\mathcal{D}_V\) in \(V\) given by
	\begin{align}
		K_V(\tau,\omega)=\left\{u\in V:\|u\|_V^2\leq R_V(\tau,\omega)\right\},\notag
	\end{align}
	where \(R_V(\tau,\omega)\) is defined as
	\begin{align}
		R_V(\tau,\omega)=M_1\left[1+|z(\omega)|^2 +\int_{-\infty}^{0}e^{\left(\nu\lambda_1-\frac{2M_\epsilon^2}{\nu}\right)r}\left(\|f(r+\tau,\cdot)\|_{L^2}^2+|z(\theta_r\omega)|^2\right)dr\right],\notag
	\end{align}
	with \(M_1\) being a positive constant independent of $\tau$ and $\omega$.
	\begin{proof}
		From \eqref{transformation}, we have
		\begin{align}
			u(\tau,\tau-t,\theta_{-\tau}\omega,u_{\tau-t})=v(\tau,\tau-t,\theta_{-\tau}\omega,v_{\tau-t})+h(x)z(\omega),\notag
		\end{align}
		where \(v_{\tau-t}=u_{\tau-t}-h(x)z(\theta_{-t}\omega)\).
		Using Lemma~\ref{lem:uniform estimates} with \(s=\tau\), we find that there exists \(T_1=T_1(\tau,\omega,D)>0\) such that, for all \(t\geq T_1\),
		\begin{align}
			&\|\nabla u(\tau,\tau-t,\theta_{-\tau}\omega,u_{\tau-t})\|_{L^2}^2\notag\\
			\leq
			&\ 2\|\nabla v(\tau,\tau-t,\theta_{-\tau}\omega,v_{\tau-t})\|_{L^2}^2 +2\|h\|_{H^2}^2|z(\omega)|^2\notag\\
			\leq
			&\ C_{\nu,\epsilon}\left[ 1+\int_{-\infty}^{0}e^{\left(\nu\lambda_1-\frac{2M_\epsilon^2}{\nu}\right)r}\|f(r+\tau,\cdot)\|_{L^2}^2dr
			+\int_{-\infty}^{0}e^{\left(\nu\lambda_1-\frac{2M_\epsilon^2}{\nu}\right)r}|z(\theta_r\omega)|^2dr \vphantom{\int_{-\infty}^{0}e^{\left(\nu\lambda_1-\frac{2M_\epsilon^2}{\nu}\right)r}}\right]
			+2\|h\|_{H^2}^2|z(\omega)|^2\notag.
		\end{align}
		Thus, there exists a positive constant \(M_1\) (independent of \(\tau\), \(\omega\) and \(D\)) such that, for all \(t \geq T_1\),
		\begin{align}
			&\|\nabla u(\tau,\tau-t,\theta_{-\tau}\omega,u_{\tau-t})\|_{L^2}^2\notag\\
			\leq\
			&M_1\left[1+|z(\omega)|^2 +\int_{-\infty}^{0}e^{\left(\nu\lambda_1-\frac{2M_\epsilon^2}{\nu}\right)r}\left(\|f(r+\tau,\cdot)\|_{L^2}^2+|z(\theta_r\omega)|^2\right)dr\right]\notag\\
			:=
			&\ R_V(\tau,\omega).\notag
		\end{align}
		This implies that for all \(t\geq T_1\),
		\begin{align}
			\Phi(t,\tau-t,\theta_{-t}\omega,D(\tau-t,\theta_{-t}\omega))=u(\tau,\tau-t,\theta_{-\tau}\omega,D(\tau-t,\theta_{-t}\omega))\subseteq K_V(\tau,\omega).\notag
		\end{align}
		Then we show that \(K_V\in\mathcal{D}_V\).
		Let \(c\) be an arbitrary positive constant, we have
		\begin{align}
			&\lim\limits_{t\to-\infty}e^{ct}|K_V(\tau+t,\theta_t\omega)|_{V}^2=\lim\limits_{t\to-\infty}e^{ct}R_V(\tau+t,\theta_t\omega)\notag\\
			=&M_1\left[1 +\lim\limits_{t\to-\infty}e^{ct}|z(\theta_t\omega)|^2 +\lim\limits_{t\to-\infty}e^{ct}\int_{-\infty}^{0}e^{\left(\nu\lambda_1-\frac{2M_\epsilon^2}{\nu}\right)r}\left(\|f(r+\tau+t,\cdot)\|_{L^2}^2+|z(\theta_{r+t}\omega)|^2\right)dr\right].\notag
		\end{align}
		By \eqref{z polynomial growth}, we obtain
		\begin{align}
			\lim\limits_{t\to-\infty}e^{ct}|z(\theta_t\omega)|^2=0,\notag\\
			\lim\limits_{t\to-\infty}e^{ct}\int_{-\infty}^{0}e^{\left(\nu\lambda_1-\frac{2M_\epsilon^2}{\nu}\right)r}|z(\theta_{r+t}\omega)|^2dr =0.\notag
		\end{align}
		By (A2), we have
		\begin{align}
			\lim\limits_{t\to-\infty}e^{ct}\int_{-\infty}^{0}e^{\left(\nu\lambda_1-\frac{2M_\epsilon^2}{\nu}\right)r}\|f(r+\tau+t,\cdot)\|_{L^2}^2dr=0.\notag
		\end{align}
		Hence, we get \(\lim\limits_{t\to-\infty}e^{ct}|K_V(\tau+t,\theta_t\omega)|_V^2=0\) for all \(c>0\), and this implies \(K_V\in\mathcal{D}_V\).
	\end{proof}
\end{lem}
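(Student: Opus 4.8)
The plan is to read off the absorbing property directly from the uniform estimate of Lemma~\ref{lem:uniform estimates}, and then to verify separately that the candidate family $K_V$ is closed, measurable, and tempered, so that it genuinely lies in $\mathcal{D}_V$. The positivity of the exponential rate $\nu\lambda_1-\frac{2M_\epsilon^2}{\nu}$, which is guaranteed by hypothesis (A3), underlies everything: it is exactly what makes the two weighted integrals appearing in $R_V(\tau,\omega)$ converge, so the absorbing time $T_1$ furnished by Lemma~\ref{lem:uniform estimates} transfers without modification.

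First I would convert the estimate on the transformed variable $v$ into one on the physical variable $u$. Evaluating Lemma~\ref{lem:uniform estimates} at $s=\tau$ controls $\|\nabla v(\tau,\tau-t,\theta_{-\tau}\omega,v_{\tau-t})\|_{L^2}^2$ for all $t\geq T_1$ by the right-hand side of \eqref{nabla v estimate}. Since the change of variables \eqref{transformation} gives $u(\tau,\tau-t,\theta_{-\tau}\omega,u_{\tau-t})=v(\tau,\tau-t,\theta_{-\tau}\omega,v_{\tau-t})+h(x)z(\omega)$ and the $V$-norm coincides with $\|\nabla\cdot\|_{L^2}$, a single application of Young's inequality yields $\|\nabla u\|_{L^2}^2\leq 2\|\nabla v\|_{L^2}^2+2\|h\|_{H^2}^2|z(\omega)|^2$. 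Absorbing every $\nu,\epsilon$-dependent constant into one $M_1$ independent of $\tau,\omega,D$ then reproduces exactly the bound $\|u\|_V^2\leq R_V(\tau,\omega)$, which is precisely the inclusion $\Phi(t,\tau-t,\theta_{-t}\omega,D(\tau-t,\theta_{-t}\omega))\subseteq K_V(\tau,\omega)$ for every $t\geq T_1$, using the cocycle identity $\Phi(t,\tau-t,\theta_{-t}\omega,\cdot)=u(\tau,\tau-t,\theta_{-\tau}\omega,\cdot)$.

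Next I would confirm $K_V\in\mathcal{D}_V$ by checking the temperedness condition in the spirit of Definition~\ref{def:tempered}, namely $\lim_{t\to-\infty}e^{ct}|K_V(\tau+t,\theta_t\omega)|_V^2=0$ for every $c>0$. Splitting $R_V(\tau+t,\theta_t\omega)$ into its three contributions, the constant term is killed by the prefactor $e^{ct}$, the terms involving $|z(\theta_t\omega)|^2$ and the weighted $z$-integral vanish by the sublinear growth of the Ornstein-Uhlenbeck process recorded in \eqref{z polynomial growth}, and the weighted forcing integral vanishes by hypothesis (A2). Closedness is immediate because each $K_V(\tau,\omega)$ is a closed ball in $V$, and measurability of $\omega\mapsto K_V(\tau,\omega)$ follows from measurability of $R_V(\tau,\cdot)$, which is assembled from the measurable O-U process and Bochner integrals of measurable integrands.

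The one genuinely delicate point is the forcing contribution in the temperedness step, i.e. controlling $e^{ct}\int_{-\infty}^{0}e^{(\nu\lambda_1-\frac{2M_\epsilon^2}{\nu})r}\|f(r+\tau+t,\cdot)\|_{L^2}^2\,dr$ as $t\to-\infty$. Because the forcing is shifted twice, by the outer factor $e^{ct}$ and by the inner time translation $r+\tau+t$, one cannot merely pull $e^{ct}$ inside the integral and invoke the finiteness provided by (A1); this is exactly the scenario that (A2) is designed to handle, and matching the expression to (A2) after an elementary change of variable is where the proof really uses the stronger decay assumption. Every remaining estimate is routine and already packaged inside Lemma~\ref{lem:uniform estimates}.
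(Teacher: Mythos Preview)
Your proposal is correct and follows essentially the same route as the paper: evaluate Lemma~\ref{lem:uniform estimates} at $s=\tau$, pass from $v$ to $u$ via \eqref{transformation} and Young's inequality, collect constants into $M_1$, and then verify temperedness of $R_V(\tau+t,\theta_t\omega)$ term by term using \eqref{z polynomial growth} for the O--U contributions and (A2) for the forcing integral. Your added remarks on closedness and measurability of $K_V$, and on why (A2) rather than (A1) is needed in the temperedness step, are accurate and in fact fill in points the paper leaves implicit.
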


\subsection{Pullback flattening}
We proceed to prove that the cocycle \(\Phi\) is $\mathcal{D}_V$-pullback flattening. The key estimates for this property are derived similarly to those proving the existence of a $\mathcal{D}_V$-pullback absorbing set.
\begin{lem}\label{lem:flattening property}
		Let $V_\delta={\rm span}\{w_1, w_2,...,w_{N_\delta}\}$ be a finite-dimensional subspace of \(V\) spanned by the first \(N_\delta(\in\mathbb{N})\) eigenfunctions $w_i\ (i=1,\ldots,N_\delta)$ of Stokes operator \(A\).
		Let \(P_\delta: V\to V_\delta\) be a bounded projection. Then for every \(\tau\in\mathbb{R}\), \(\omega\in\Omega\), \(\delta>0\) and \(D=\{D(\tau,\omega):\tau\in\mathbb{R},\omega\in\Omega\}\in\mathcal{D}_V\), there exists \(T_1=T_1(\tau,\omega,D)>0\) such that for all \(t\geq T_1\),
		\item[(i)] \(\|P_\delta u(\tau,\tau-t,\theta_{-\tau}\omega,u_{\tau-t})\|_{V}\) is bounded, and
		\item[(ii)] \(\|(I-P_{\delta})u(\tau,\tau-t,\theta_{-\tau}\omega,u_{\tau-t})\|_{V}<\delta\),
		where \(u_{\tau-t}=v_{\tau-t}+h(x)z(\theta_{-t}\omega)\), \(v_{\tau-t}\in D(\tau-t,\theta_{-t}\omega)\).
\end{lem}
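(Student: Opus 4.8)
The plan is to split $V$ along the spectral decomposition $V=P_\delta V\oplus(I-P_\delta)V$ and treat the two assertions separately, with (i) coming essentially for free from the absorbing set and (ii) carrying all the analytic content. For (i), I would observe that at time $s=\tau$ Lemma~\ref{lem:absorbing set} gives $u(\tau,\tau-t,\theta_{-\tau}\omega,u_{\tau-t})\in K_V(\tau,\omega)$ for all $t\geq T_1$, and that the spectral projection $P_\delta$ onto the first $N_\delta$ Stokes eigenfunctions satisfies $\|P_\delta w\|_V^2=\sum_{j\leq N_\delta}\lambda_j|w_j|^2\leq\|w\|_V^2$; hence $\|P_\delta u(\tau,\cdot)\|_V\leq\|u(\tau,\cdot)\|_V\leq R_V(\tau,\omega)^{1/2}$, which is the required bound.

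For (ii), I write $u(\tau,\tau-t,\theta_{-\tau}\omega,u_{\tau-t})=v(\tau,\cdot)+h\,z(\omega)$, so that $(I-P_\delta)u=(I-P_\delta)v+z(\omega)(I-P_\delta)h$. The second summand is harmless: since $h\in\mathcal D(A)$, the $V$-analogue of \eqref{I-Pm}, namely $\|(I-P_\delta)h\|_V^2\leq\lambda_{N_\delta+1}^{-1}\|Ah\|_{L^2}^2$, shows that for fixed $\omega$ the quantity $z(\omega)(I-P_\delta)h$ can be made smaller than $\delta/2$ in $V$ by taking $N_\delta$ large. The crux is therefore the high-mode component $q:=(I-P_\delta)v$. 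I would project \eqref{calmed OU} by $I-P_\delta$ (which commutes with $A$) and test with $Aq$, using $(\tfrac{d}{dt}q,Aq)=\tfrac12\tfrac{d}{dt}\|\nabla q\|_{L^2}^2$ and controlling the nonlinear term \emph{only} through the $L^\infty$ bound $M_\epsilon$ on the calming function,
\[
|b(\zeta^\epsilon(v+hz),v+hz,Aq)|\leq C M_\epsilon\|\nabla(v+hz)\|_{L^2}\|Aq\|_{L^2}.
\]
After Young's inequality to absorb $\tfrac{\nu}{2}\|Aq\|_{L^2}^2$ and the high-mode Poincar\'e inequality $\|Aq\|_{L^2}^2\geq\lambda_{N_\delta+1}\|\nabla q\|_{L^2}^2$, this yields
\begin{align*}
\frac{d}{dt}\|\nabla q\|_{L^2}^2+\nu\lambda_{N_\delta+1}\|\nabla q\|_{L^2}^2
\leq\ &C_\nu M_\epsilon^2\|\nabla(v+hz)\|_{L^2}^2+C_\nu\|f\|_{L^2}^2\\
&+C_\nu|z(\theta_t\omega)|^2\left(\|(I-P_\delta)Ah\|_{L^2}^2+\|(I-P_\delta)h\|_{L^2}^2\right).
\end{align*}

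Then, exactly as in Lemma~\ref{lem:uniform estimates}, I multiply by $e^{\nu\lambda_{N_\delta+1}t}$ and integrate in the pullback sense over $[\tau-t,\tau]$ (replacing $\omega$ by $\theta_{-\tau}\omega$), bounding $\|\nabla q(\tau)\|_{L^2}^2$ by the initial-data term $e^{-\nu\lambda_{N_\delta+1}t}\|\nabla v_{\tau-t}\|_{L^2}^2$ plus the convolution of the right-hand side against $e^{-\nu\lambda_{N_\delta+1}(\tau-s)}$. The initial-data term tends to $0$ as $t\to\infty$ because $v_{\tau-t}\in D(\tau-t,\theta_{-t}\omega)$ and $D\in\mathcal D_V$ is tempered, exactly as in Lemma~\ref{lem:uniform estimates}; this fixes $T_1$ \emph{after} $N_\delta$ is chosen. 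For the integral, I substitute the explicit bound on $\|\nabla v(s)\|_{L^2}^2$ from Lemma~\ref{lem:uniform estimates}: its growth factor $e^{(\nu\lambda_1-2M_\epsilon^2/\nu)(\tau-s)}$ combines with the decay factor $e^{-\nu\lambda_{N_\delta+1}(\tau-s)}$ to leave a net positive rate, so the nonlinear contribution integrates to at most $C(\tau,\omega)\big/\!\left(\nu\lambda_{N_\delta+1}-\nu\lambda_1+\tfrac{2M_\epsilon^2}{\nu}\right)$, which $\to0$ as $N_\delta\to\infty$. The pure forcing term $\int_{-\infty}^{\tau}e^{-\nu\lambda_{N_\delta+1}(\tau-s)}\|f(s)\|_{L^2}^2\,ds\to0$ by writing $e^{-\nu\lambda_{N_\delta+1}(\tau-s)}=e^{-\alpha(\tau-s)}e^{-(\nu\lambda_{N_\delta+1}-\alpha)(\tau-s)}$ and invoking dominated convergence together with (A1); the $|z|^2$ terms are handled identically via \eqref{z polynomial growth}, and the coefficients $\|(I-P_\delta)Ah\|_{L^2}^2,\|(I-P_\delta)h\|_{L^2}^2\to0$ since $Ah,h\in H$.

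The recipe is thus: fix $\delta,\tau,\omega,D$; first choose $N_\delta$ so large that the entire convolution integral and the term $z(\omega)(I-P_\delta)h$ are each below $\delta^2/4$ (resp. $\delta/2$); then choose $T_1$ so large that the initial-data term falls below $\delta^2/4$. I expect the main obstacle to be organizing this double limit correctly: the constants in the integral bound depend on $\tau,\omega$ through (A1) and the temperedness of $z$, so one must verify they are finite and uniform in $t$ \emph{before} sending $N_\delta\to\infty$, and the nonlinear term must be tamed purely by the $L^\infty$ bound $M_\epsilon$ on $\zeta^\epsilon$. It is precisely this boundedness of the calmed advective field that keeps the whole flattening estimate at the $H^1$/$V$ level and avoids any $H^2$ bound, which is the structural advantage exploited here.
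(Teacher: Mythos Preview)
Your proposal is correct and follows essentially the same route as the paper: part (i) via the absorbing set and $\|P_\delta\cdot\|_V\le\|\cdot\|_V$, part (ii) by splitting off $z(\omega)(I-P_\delta)h$, testing the projected equation against $AQv$, bounding the nonlinear term solely through $M_\epsilon$, applying the high-mode Poincar\'e inequality, and then analyzing the pullback convolution integral with Lemma~\ref{lem:uniform estimates} for the $\|\nabla v\|_{L^2}^2$ contribution. The only cosmetic differences are that the paper absorbs the $(I-P_\delta)h$ coefficients into a single constant $C_{\nu,\epsilon}$ (rather than tracking them to zero), handles the forcing integral $I_3$ by citing \cite{kloeden2007pullback} instead of dominated convergence, and uses $\lambda_{N_\delta}$ where you (more sharply) use $\lambda_{N_\delta+1}$; your explicit ordering ``choose $N_\delta$ first, then $T_1$'' is in fact cleaner than the paper's slightly informal ``for all $t\ge T_1$ and $N_\delta\to\infty$''.
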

\begin{proof}
	The first boundedness condition (i) clearly holds from Lemma~\ref{lem:absorbing set} and the fact that \(\|P_\delta u\|_{V}\leq\|u\|_{V}\) for all \(u\in V\). Then we verify the second condition (ii). Let \(Q=I-P_\delta\) and
	by \(u(\tau,\tau-t,\theta_{-\tau}\omega,u_{\tau-t})=v(\tau,\tau-t,\theta_{-\tau}\omega,v_{\tau-t})+h(x)z(\omega)\), we have
	\begin{align}
		\|Qu(\tau,\tau-t,\theta_{-\tau}\omega,u_{\tau-t})\|_{V}\leq
		\|Qv(\tau,\tau-t,\theta_{-\tau}\omega,v_{\tau-t})\|_{V} +|z(\omega)|\|Qh(x)\|_{V}.\notag
	\end{align}
	Since \(h\in \mathcal{D}(A)\), we obtain
	\begin{align}
		|z(\omega)|\|Qh(x)\|_{V}<\frac{1}{\sqrt{\lambda_{N_\delta}}}|z(\omega)\|Q\Delta h\|_{L^2}<\frac{\delta}{2},\notag
	\end{align}
	as \(\lambda_{N_\delta}\to +\infty\).
	Taking the action of \eqref{weak formulation} with \(AQv\), we obtain
	\begin{align}
		\frac{d}{dt}\|\nabla Qv\|_{L^2}^2 +\nu\|AQv\|_{L^2}^2
		\leq
		\frac{2M_\epsilon^2}{\nu}\|\nabla v\|_{L^2}^2 +C_{\nu,\epsilon}\left(\|f\|_{L^2}^2 +|z(\theta_t\omega)|^2\right). \notag
	\end{align}
	Since \(\lambda_{N_\delta}\|\nabla Qv\|_{L^2}^2\leq \|AQv\|_{L^2}^2\), we have
	\begin{align}
		\frac{d}{dt}\|\nabla Qv\|_{L^2}^2 +\nu\lambda_{N_\delta}\|\nabla Qv\|_{L^2}^2
		\leq
		\frac{2M_\epsilon^2}{\nu}\|\nabla v\|_{L^2}^2 +C_{\nu,\epsilon}\left(\|f\|_{L^2}^2 +|z(\theta_t\omega)|^2\right). \notag
	\end{align}
	Multiplying \(e^{\nu\lambda_{N_\delta}t}\) and integrating on \([\tau-t,\tau]\), we obtain
	\begin{align}
		&\|\nabla Qv(\tau,\tau-t,\omega,v_{\tau-t})\|_{L^2}^2 \notag\\
		\leq
		&\ e^{-\nu\lambda_{N_\delta}t}\|\nabla Qv_{\tau-t}\|_{L^2}^2 +\frac{2M_\epsilon^2}{\nu}e^{-\nu\lambda_{N_\delta}\tau}\int_{\tau-t}^{\tau}e^{\nu\lambda_{N_\delta}s}\|\nabla v(s,\tau-t,\omega,v_{\tau-t})\|_{L^2}^2ds \notag\\
		&\ +
		C_{\nu,\epsilon}e^{-\nu\lambda_{N_\delta}\tau}\int_{\tau-t}^{\tau}e^{\nu\lambda_{N_\delta}s}\left(\|f(s,\cdot)\|_{L^2}^2+|z(\theta_s\omega)|^2\right)ds.\notag
	\end{align}
	Replacing \(\omega\) with \(\theta_{-\tau}\omega\), we have
	\begin{align}
		&\|\nabla Qv(\tau,\tau-t,\theta_{-\tau}\omega,v_{\tau-t})\|_{L^2}^2\notag\\
		\leq
		&\
		e^{-\nu\lambda_{N_\delta}t}\|\nabla Qv_{\tau-t}\|_{L^2}^2 +\frac{2M_\epsilon^2}{\nu}e^{-\nu\lambda_{N_\delta}\tau}\int_{\tau-t}^{\tau}e^{\nu\lambda_{N_\delta}s}\|\nabla v(s,\tau-t,\theta_{-\tau}\omega,v_{\tau-t})\|_{L^2}^2ds \notag\\
		&\ +
		C_{\tau,\epsilon}e^{-\nu\lambda_{N_\delta}\tau}\int_{\tau-t}^{\tau}e^{\nu\lambda_{N_\delta}s}\left(\|f(s,\cdot)\|_{L^2}^2+|z(\theta_{s-\tau}\omega)|^2\right)ds\notag\\
		\leq
		&\
		e^{-\nu\lambda_{N_\delta}t}\|\nabla v_{\tau-t}\|_{L^2}^2 +\frac{2M_\epsilon^2}{\nu}e^{-\nu\lambda_{N_\delta}\tau}\int_{\tau-t}^{\tau}e^{\nu\lambda_{N_\delta}s}\|\nabla v(s,\tau-t,\theta_{-\tau}\omega,v_{\tau-t})\|_{L^2}^2ds \notag\\
		&\ +
		C_{\nu,\epsilon}e^{-\nu\lambda_{N_\delta}\tau}\int_{-\infty}^{\tau}e^{\nu\lambda_{N_\delta}s}\|f(s,\cdot)\|_{L^2}^2ds
		+C_{\nu,\epsilon}
		e^{-\nu\lambda_{N_\delta}\tau}\int_{-t}^{0}e^{\nu\lambda_{N_\delta}(s+\tau)}|z(\theta_{s}\omega)|^2ds\notag\\
		:= &\ I_1+I_2+I_3+I_4. \notag
	\end{align}
	It follows from \(v_{\tau-t}\in D(\tau-t,\theta_{-t}\omega)\) that
	\begin{align}\label{I1to0}
		I_1\leq e^{-\nu\lambda_{N_\delta}t}\|D(\tau-t,\theta_{-t}\omega)\|_V^2\to0\ \ \ \text{as}\ t\to\infty.
	\end{align}
	By Lemma~\ref{lem:uniform estimates}, we obtain
	\be[I2to0]
		I_2\leq
		&C_{\nu,\epsilon}e^{-\nu\lambda_{N_\delta}\tau}\int_{\tau-t}^{\tau}e^{\nu\lambda_{N_\delta}s}e^{-\left(\nu\lambda_1-\frac{2M_\epsilon^2}{\nu}\right)\left(s-\tau\right)}\left[1 +\int_{-\infty}^{s-\tau}e^{\left(\nu\lambda_1-\frac{2M_\epsilon^2}{\nu}\right)r}\|f(r+\tau,\cdot)\|_{L^2}^2dr \vphantom{\int_{-\infty}^{s-\tau}}\right.\notag\\
		&\
		\left.+\int_{-\infty}^{s-\tau}e^{\left(\nu\lambda_1-\frac{2M_\epsilon^2}{\nu}\right)r}|z(\theta_r\omega)|^2dr \vphantom{\int_{-\infty}^{s-\tau}}\right]ds\notag\\
		\leq
		&e^{-\nu\lambda_{N_\delta}\tau}\int_{\tau-t}^{\tau}e^{\nu\lambda_{N_\delta}s}e^{-\left(\nu\lambda_1-\frac{2M_\epsilon^2}{\nu}\right)\left(s-\tau\right)}C(\tau,\omega,\epsilon)ds\notag\\
		\leq&
		\ C(\tau,\omega,\epsilon) e^{-\left(\nu\lambda_{N_\delta}-\nu\lambda_1+\frac{2M_\epsilon^2}{\nu}\right)\tau}\int_{\tau-t}^{\tau}e^{\left(\nu\lambda_{N_\delta}-\nu\lambda_1+\frac{2M_\epsilon^2}{\nu}\right)s}ds\notag\\
		\leq
		&\frac{1}{\nu\lambda_{N_\delta}-\nu\lambda_1+\frac{2M_\epsilon^2}{\nu}}C(\tau,\omega,\epsilon)\notag\\
		\to &\ 0 ,
	\ee
	as \(\lambda_{N_\delta}\to +\infty\), where
	\begin{align}
		C(\tau,\omega,\epsilon)=C_{\nu,\epsilon}\left(1 +\int_{-\infty}^{0}e^{\left(\nu\lambda_1-\frac{2M_\epsilon^2}{\nu}\right)r}\|f(r+\tau,\cdot)\|_{L^2}^2dr
		+\int_{-\infty}^{0}e^{\left(\nu\lambda_1-\frac{2M_\epsilon^2}{\nu}\right)r}|z(\theta_r\omega)|^2dr\right)\notag
	\end{align} is bounded.
	It follows from (\cite{kloeden2007pullback}, Lemma 12) that
	\begin{align}\label{I3to0}
		\lim\limits_{\lambda_{N_\delta}\to+\infty}I_3=0.
	\end{align}
	For \(I_4\), by setting \(\alpha\in(0,\nu\lambda_1)\) and using \eqref{z polynomial growth}, we have
	\be[I4to0]
		& e^{-\nu\lambda_{N_\delta}\tau}\int_{-t}^{0}e^{\nu\lambda_{N_\delta}(s+\tau)}|z(\theta_{s}\omega)|^2ds
		\notag\\
		=&\int_{-t}^{0}e^{\left(\nu\lambda_{N_\delta}-\frac{\alpha}{2}\right)s} e^{\frac{\alpha}{2}s}|z(\theta_s\omega)|^2ds\notag\\
		\leq
		&
		\left(\int_{-t}^{0}e^{2\left(\nu\lambda_{N_\delta}-\frac{\alpha}{2}\right)s}ds\right)^\frac{1}{2}
		\left(\int_{-t}^{0}e^{\alpha s}|z(\theta_s\omega)|^4ds\right)^\frac{1}{2}\notag\\
		\leq
		& \left(\frac{1-e^{-2\left(\nu\lambda_{N_\delta}-\frac{\alpha}{2}\right)t}}{2\left(\nu\lambda_{N_{\delta}}-\frac{\alpha}{2}\right)}\right)^\frac{1}{2}
		\left(\int_{-\infty}^{0}e^{\alpha s}|z(\theta_s\omega)|^4ds\right)^\frac{1}{2}\notag\\
		\to &\ 0,
	\ee
	as \(\lambda_{N_\delta}\to +\infty\).
	Thus, by \eqref{I1to0}, \eqref{I2to0}, \eqref{I3to0} and \eqref{I4to0}, there exists \(T_1=T_1(\tau,\omega,D)>0\), for all \(t\geq T_1\) and \(N_{\delta}\to\infty\),
	\begin{align}
		\|\nabla Qv(\tau,\tau-t,\theta_{-\tau}\omega,v_{\tau-t})\|_{L^2}<\frac{\delta}{2}.\notag
	\end{align}
	The proof completes.
\end{proof}

\subsection{Existence of a $\mathcal{D}_V$-pullback attractor}
Our main theorem concerns the existence of $\mathcal{D}_V$-pullback attractors for $\Phi$.
\begin{thm}
	Suppose \(f\in L_{\rm loc}^2(\mathbb{R};H)\), assumptions (A1), (A2) and (A3) hold. Then the continuous cocycle $\Phi$ associated with problem \eqref{calmed u} possesses a unique $\mathcal{D}_V$-pullback attractor $\mathcal{A}=\left\{\mathcal{A}(\tau,\omega):\tau\in\mathbb{R},\omega\in\Omega)\right\}\in\mathcal{D}_V$ in $V$.
\end{thm}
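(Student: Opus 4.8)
The plan is to deduce the result directly from the abstract criterion in Proposition~\ref{prop:pullback attractor}, so the whole argument reduces to verifying its hypotheses for the choice $X=V$ and $\mathcal{D}=\mathcal{D}_V$. First I would record that $V$ is a uniformly convex Banach space: since $V$ is a closed subspace of the Hilbert space $H_0^1(\mathcal{O})$, it is itself a Hilbert space, and every Hilbert space is uniformly convex. This disposes of the ambient-space requirement of the proposition.

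Next I would confirm that $\Phi$ is a continuous cocycle (RDS) on $V$. The measurability together with the identity and cocycle properties (i)--(iii) of Definition~\ref{def:continuous cocycle} follow from the construction of $\Phi$ at the beginning of Section~\ref{sect4}, combined with the $\theta_t$-invariance of $\Omega$ and the composition behaviour of the Ornstein--Uhlenbeck transformation \eqref{transformation}; the continuity (iv) in the initial datum is precisely the continuous-dependence statement established in Theorem~\ref{thm:existence v}. Thus $\Phi$ fulfils all the structural requirements of Proposition~\ref{prop:pullback attractor}.

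The two analytic ingredients are furnished by the preceding lemmas: Lemma~\ref{lem:absorbing set} produces a closed, measurable $\mathcal{D}_V$-pullback absorbing set $K_V\in\mathcal{D}_V$, and Lemma~\ref{lem:flattening property} establishes the $\mathcal{D}_V$-pullback flattening property. Here I would spell out the single point that needs care, which is also the place I expect to be the main obstacle: the flattening \emph{definition} demands control of $P_\delta$ and $(I-P_\delta)$ over the \emph{union} $\bigcup_{t\geq T}\Phi(t,\tau-t,\theta_{-t}\omega,D(\tau-t,\theta_{-t}\omega))$, whereas Lemma~\ref{lem:flattening property} delivers the bounds for each fixed $t\geq T_1$. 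Since the threshold $T_1=T_1(\tau,\omega,D)$ and the resulting estimates $\|(I-P_\delta)u\|_V<\delta$ and the boundedness of $\|P_\delta u\|_V$ are \emph{uniform} over all $t\geq T_1$, the supremum over the union inherits the same bounds, so conditions (i)--(ii) of the flattening definition hold with this $T_1$, the finite-dimensional subspace $V_\delta$, and its bounded projection $P_\delta$.

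With the uniform convexity of $V$, the continuous-cocycle property of $\Phi$, the closed measurable absorbing set $K_V$, and the flattening property all in hand, Proposition~\ref{prop:pullback attractor} applies verbatim and yields a unique $\mathcal{D}_V$-pullback attractor $\mathcal{A}=\{\mathcal{A}(\tau,\omega):\tau\in\mathbb{R},\omega\in\Omega\}\in\mathcal{D}_V$ in $V$, which completes the proof. The only genuine subtlety is the passage from the pointwise-in-$t$ flattening estimates of Lemma~\ref{lem:flattening property} to the union formulation required by the definition; every remaining step is a direct citation of an already-established property.
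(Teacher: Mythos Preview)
Your proposal is correct and follows essentially the same approach as the paper: both proofs cite Lemma~\ref{lem:absorbing set} for the closed measurable absorbing set, Lemma~\ref{lem:flattening property} for the flattening property, and then invoke Proposition~\ref{prop:pullback attractor}. Your version is simply more detailed---you explicitly check the uniform convexity of $V$, the cocycle structure, and the passage from pointwise-in-$t$ flattening estimates to the union formulation---whereas the paper's proof is a terse three-sentence citation of the same ingredients.
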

\begin{proof}
	By Lemma~\ref{lem:absorbing set}, $\Phi$ admins a closed measurable $\mathcal{D}_V$-pullback absorbing set in $V$. Furthermore, Lemma~\ref{lem:flattening property} guaranteers that $\Phi$ is $\mathcal{D}_V$-pullback flattening in $V$. Based on these results and invoking Proposition~\ref{prop:pullback attractor}, we deduce that $\Phi$ has a unique $\mathcal{D}_V$-pullback attractor $\mathcal{A}$ in $V$.
\end{proof}

\bibliographystyle{abbrv}%plain %alpha %unsrt%siam
\bibliography{reference11}

\end{document}